\newtheorem{theorem}{Theorem}[section]
\newtheorem{proposition}[theorem]{Proposition}
\newtheorem{example}[theorem]{Example}
\theoremstyle{definition}
\newtheorem{definition}[theorem]{Definition}
\theoremstyle{remark}
\newtheorem{remark}[theorem]{Remark}
\numberwithin{equation}{section}
\newcommand{\K}{\mathbb K}
\newcommand{\tone}{{
\begin{picture}(2,10)(0,3)
\drawline(1,0)(1,10)
\put(1,10){\circle*{2}}
\end{picture}
}}
\newcommand{\ttwo}{{
\begin{picture}(16,14)(0,5)
\drawline(8,0)(8,6)(0,14)
\drawline(8,6)(16,14)
\put(0,14){\circle*{2}}
\put(16,14){\circle*{2}}
\put(8,6){\circle*{2}}
\end{picture}
}}
\newcommand{\tthreeone}{{
\begin{picture}(16,14)(0,5)
\drawline(8,0)(8,6)(0,14)
\drawline(8,6)(16,14)
\drawline(12,10)(8,14)
\put(0,14){\circle*{2}}
\put(16,14){\circle*{2}}
\put(8,14){\circle*{2}}
\put(8,6){\circle*{2}}
\put(12,10){\circle*{2}}
\end{picture}
}}
\newcommand{\tthreetwo}{{
\begin{picture}(16,14)(0,5)
\drawline(8,0)(8,6)(0,14)
\drawline(4,10)(8,14)
\drawline(8,6)(16,14)
\put(0,14){\circle*{2}}
\put(16,14){\circle*{2}}
\put(8,14){\circle*{2}}
\put(8,6){\circle*{2}}
\put(4,10){\circle*{2}}
\end{picture}
}}
\newcommand{\tfourone}{{
\begin{picture}(18,15)
\drawline(9,-4)(9,2)(0,11)
\drawline(9,2)(18,11)
\drawline(12,5)(6,11)
\drawline(15,8)(12,11)
\put(0,11){\circle*{2}}
\put(18,11){\circle*{2}}
\put(6,11){\circle*{2}}
\put(12,11){\circle*{2}}
\put(9,2){\circle*{2}}
\put(12,5){\circle*{2}}
\put(15,8){\circle*{2}}
\end{picture}
}}
\newcommand{\tfourtwo}{{
\begin{picture}(18,15)
\drawline(9,-4)(9,2)(0,11)
\drawline(9,2)(18,11)
\drawline(12,5)(6,11)
\drawline(9,8)(12,11)
\put(0,11){\circle*{2}}
\put(18,11){\circle*{2}}
\put(6,11){\circle*{2}}
\put(12,11){\circle*{2}}
\put(9,2){\circle*{2}}
\put(12,5){\circle*{2}}
\put(9,8){\circle*{2}}
\end{picture}
}}
\newcommand{\tfourthree}{{
\begin{picture}(18,15)
\drawline(9,-4)(9,2)(0,11)
\drawline(3,8)(6,11)
\drawline(9,2)(18,11)
\drawline(15,8)(12,11)
\put(0,11){\circle*{2}}
\put(18,11){\circle*{2}}
\put(6,11){\circle*{2}}
\put(12,11){\circle*{2}}
\put(9,2){\circle*{2}}
\put(3,8){\circle*{2}}
\put(15,8){\circle*{2}}
\end{picture}
}}
\newcommand{\tfourfour}{{
\begin{picture}(18,15)
\drawline(9,-4)(9,2)(0,11)
\drawline(6,5)(12,11)
\drawline(9,8)(6,11)
\drawline(9,2)(18,11)
\put(0,11){\circle*{2}}
\put(18,11){\circle*{2}}
\put(6,11){\circle*{2}}
\put(12,11){\circle*{2}}
\put(9,2){\circle*{2}}
\put(6,5){\circle*{2}}
\put(9,8){\circle*{2}}
\end{picture}
}}
\newcommand{\tfourfive}{{
\begin{picture}(18,15)
\drawline(9,-4)(9,2)(0,11)
\drawline(3,8)(6,11)
\drawline(6,5)(12,11)
\drawline(9,2)(18,11)
\put(0,11){\circle*{2}}
\put(18,11){\circle*{2}}
\put(6,11){\circle*{2}}
\put(12,11){\circle*{2}}
\put(9,2){\circle*{2}}
\put(6,5){\circle*{2}}
\put(3,8){\circle*{2}}
\end{picture}
}}
\begin{document}
\title[Ahmed Zahari\textsuperscript{2} and Sania Asif \textsuperscript{1,*$\dag$}]{Derivations, Cohomology  and Deformation of BiHom-Associative Dialgebra}
	\author{ Ahmed Zahari\textsuperscript{2}, Sania Asif \textsuperscript{1,*$\dag$}}
\address{\textsuperscript{1$\dag$}School of Mathematics and Statistics, Nanjing University of information science and Technology, 210044, Nanjing, Jiangsu Province, PR China.}
\address{\textsuperscript{2}Universit\'{e} de Haute Alsace, IRIMAS-D\'{e}partement de Math\'{e}matiques, 18, rue des Fr\`eres Lumi\`ere F-68093 Mulhouse, France.}
\email{\textsuperscript{2}zaharymaths@gmail.com}
	\email{\textsuperscript{1,*$\dag$}11835037@zju.edu.cn}
 \subjclass[2000]{16W20,17D25}
\keywords{BiHom-associative dialgebra, Derivation,Quasi-derivation, ,Generalized-derivation, Cohomology, Deformation.}
%
\begin{abstract} Due to the immense importance of BiHom Type algebras and cohomology of various algebraic structures, this paper is devoted to defining the BiHom-associative dialgebra, its derivation, generalized derivation, and quasi-derivation. We  provided the complete classification of these derivations of $2-$ and $3$-dimensional BiHom-associative dialgebras. We further generalized the cohomology of BiHom-associative algebras to the cohomology of  BiHom-associative dialgebras. As an application to cohomology, we evaluate the one-parameter formal deformation of BiHom-associative dialgebras.
\end{abstract}
\footnote{Both authors contributed equally. \\
Corresponding Emails:{\textsuperscript{1,*$\dag$}11835037@zju.edu.cn};{\textsuperscript{2}zaharymaths@gmail.com}}
\maketitle
\section{introduction}In this paper, we deal with certain types of algebras, called BiHom-type algebras. In these algebras, the identities defining the structures are twisted by two homomorphisms. Recently, BiHom-type algebras have been studied by many authors. The notion of BiHom-Lie algebras was first introduced by a G.Grazini et. al. in \cite{GACF}. BiHom-Lie algebras appeared as an extension of Hom-Lie algebra (Lie algebra structure twisted by a single homomorphism map). Later on, BiHom algebras are studied with reference to BiHom-Lie algebra, BiHom-associative algebra, BiHom-Leibniz algebra, and BiHom-Transposed algebra in \cite{KA, ML}. BiHom-bialgebras structure equipped with BiHom-coalgebra and BiHom-algebra structure under some compatible conditions is studied in \cite{LAF, LCP}. In addition, the BiHom structure of various operator algebras and operads is also studied in \cite{XL}.\\
 In view of the huge importance of BiHom algebra structures, our interest in this paper is to study BiHom-associative dialgebra or BiHom-diassociative algebra, see \cite{MM, P, RRB, RRB1} for more details. The associative dialgebras (also known as diassociative algebras) as  a generalization of associative algebras has been introduced by Loday in 1990 (see \cite{L} and references therein). Originally, an associative dialgebra  is a triple $(D, \dashv, \vdash)$  consisting of a vector space $D$, and two multiplication maps $\vdash, \dashv$ that satisfies the following axioms: \begin{eqnarray}
(x\dashv y)\dashv z&=& x\dashv(y\dashv z), \\
(x\dashv y)\dashv z&=& x\dashv(y\vdash z),\\
(x\vdash y)\dashv z&=& x\vdash(y\dashv z), \\
(x\dashv y)\vdash z&=& x\vdash(y\vdash z), \\
(x\vdash y)\vdash z&=& x\vdash(y\vdash z), 
\end{eqnarray}for all $x, y, z\in D$.
They are a generalization of associative algebras in the sense that they possess two associative multiplications. When the two associative axioms are equal we recover associative algebra. The study of these algebras is important because of their wide range of applications in various disciplines of mathematics including computer science and  physics. In pure mathematics, its motivation lies in  classical geometry and non-commutative geometry. Moreover, a better understanding of these structures can lead to  the development of better models for physical phenomena. In particular, disassociative algebra involves its motivation to devise an algebra whose commutator leads to a Leibniz algebra, similar to the relationship between Lie algebra and associative algebra. Another motivation comes from the research of an obstruction to the periodicity in algebraic $ K$ theory. Just like other algebraic structures, when twisted by one or two morphism maps, we get Hom and BiHom algebras. One can also get BiHom associative dialgebra, by twisting the disassociative algebra structure. \par
Cohomology of algebra is also an important mathematical structure, which tells us about the number of ways a thing can go wrong. It is always been a center of attraction for many researchers and significant research in this direction can be seen in \cite{YH, AD1}. Cohomology of various BiHom algebras structures  is developed  \cite{AM}. Loday has constructed a cohomology theory for associative  dialgebras, that involves the use of planar binary trees in the construction of the dialgebra complex that defines the cohomology. Dialgebras cohomology with the coefficients was developed in \cite{AF}. Later on Das in \cite{AD} studied the cohomology of BiHom-associative algebra inherits a Gerstenhaber  structure and controls formal deformations. The same relation also holds for associative dialgebras that can be seen in \cite{L, MM}. Taking motivation from the above-cited papers on BiHom type algebras, cohomology, and associative dialgebras, we managed to combine all these structures into a single algebraic structure called "BiHom-associative dialgebra" and evaluated its cohomology and formal deformation.\par
This paper is organized as follows: In Section $2$, we defined the notion of BiHom-associative dialgebra, and showed that for the identical twist maps, our theory of BiHom-associative dialgebra coincides with associative algebras and  Hom-associative dialgberas. In Section $3$, we study the derivations, generalized derivations, and quasi-derivation of BiHom associative dilagebras and presented many important propositions and examples for a better understanding of the algebraic structure under consideration. We further provided the classification of derivations, quasi derivations, and generalized derivations of $2-$ and $3$-dimensional BhiHom- associative dialgebras. In Section $4$, we first recall the notion of the cohomology of BiHom-associative algebra and then generalize it to the cohomology of  BiHom-associative dialgebras. As an application to the cohomology, we evaluated one-parameter foraml deformation of BiHom-associative dialgebras.
\section{Preliminaries}In this section we give some important definitions and useful remarks to support our findings in the next sections.
\begin{definition}\label{dia}
A BiHom-associative dialgebras is a $5$-tuple $(A, \dashv, \vdash, \varphi, \psi)$ consisting of a  linear space $A$ linear maps
 $\dashv, \vdash,: A\times A \longrightarrow A$ and  $\varphi, \psi : A\longrightarrow A$, satisfying the following
 conditions : 
\begin{eqnarray}
\varphi\circ\psi&=&\psi\circ\varphi,\\
(x\dashv y)\dashv\psi(z)&=&\varphi(x)\dashv(y\dashv z),\label{eq1}\\
(x\dashv y)\dashv\psi(z)&=&\varphi(x)\dashv(y\vdash z),\label{eq2}\\
(x\vdash y)\dashv\psi(z)&=&\varphi(x)\vdash(y\dashv z),\label{eq3}\\
(x\dashv y)\vdash\psi(z)&=&\varphi(x)\vdash(y\vdash z),\label{eq4}\\
(x\vdash y)\vdash\psi(z)&=&\varphi(x)\vdash(y\vdash z),\label{eq5}
\end{eqnarray}for all $x, y, z\in A$. 
\end{definition}
We called $\varphi$ and $\psi$ ( in this order ) the structure maps of A.
\begin{remark}
 A BiHom-associative dialgebra in which $\dashv~=~\vdash$ is called a BiHom-associative algebra \cite{AI}.
\end{remark}
\begin{definition}
 Let $({D}, \dashv, \vdash, \varphi, \psi)$ and $({D}', \dashv',\vdash', \varphi', \psi')$ be two  BiHom-associative dialgebras. A linear map 
$f : {D}\rightarrow {D}'$ is a morphism of BiHom-associative dialgebras if
$$\varphi'\circ f=f\circ\varphi,\quad \psi'\circ f=f\circ\psi,\quad f \circ\dashv ~=~\dashv\circ( f\otimes f)\;\;\mbox{and}\;\; f \circ\vdash
 =\vdash'\circ( f\otimes f).$$
\end{definition}
\begin{definition}
A BiHom-associative dialgebra $(A, \dashv, \vdash, \varphi, \psi)$ in which $\varphi$ and $\psi$ are morphisms is said to be a multiplicative
 BiHom-associative dialgebra.\\
If moreover, $\varphi$ and $\psi$ are bijective (i.e. automorphisms), then $(A, \dashv, \vdash, \varphi, \psi)$ is said to be a 
regular BiHom-associative dialgebra.
\end{definition}

\begin{remark}
 A BiHom-associative dialgebra in which $\varphi=\psi$ is said to be a Hom-associative dialgebra \cite{L}.
 If in addition, $\varphi=\psi=Id$, $A$ is called an associative dialgebra. More precisely, A Hom-associative dialgebra is a quadruple 
$(A, \dashv, \vdash, \phi)$ such that
\begin{eqnarray}
(x\dashv y)\dashv\varphi(z)&=&\varphi(x)\dashv(y\dashv z),\label{eq6}\\
(x\dashv y)\dashv\varphi(z)&=&\varphi(x)\dashv(y\vdash z),\label{eq7}\\
(x\vdash y)\dashv\varphi(z)&=&\varphi(x)\vdash(y\dashv z),\label{eq8}\\
(x\dashv y)\vdash\varphi(z)&=&\varphi(x)\vdash(y\vdash z),\label{eq9}\\
(x\vdash y)\vdash\varphi(z)&=&\varphi(x)\vdash(y\vdash z).\label{eq10}
\end{eqnarray}
while an associative dialgebra is a triple $(A, \dashv, \vdash)$ such that
\begin{eqnarray}
(x\dashv y)\dashv z&=&x\dashv(y\dashv z),\label{eq11}\\
(x\dashv y)\dashv z&=&x\dashv(y\vdash z),\label{eq12}\\
(x\vdash y)\dashv z&=&x\vdash(y\dashv z),\label{eq13}\\
(x\dashv y)\vdash z&=&x\vdash(y\vdash z),\label{eq14}\\
(x\vdash y)\vdash z&=&x\vdash(y\vdash z).\label{eq15}
\end{eqnarray}
\end{remark}
\section{The  $(\varphi^k, \psi^l)$-Derivation of BiHom-associative dialgebras}
In this section, we introduce and study derivations of BiHom-associative dialgebras. For any non negative integer $k$ and $l$ by $\varphi^k$ the $k$-times composition of $\varphi$ and $\psi^l$ the $l$-times composition of $\psi$, 
i.e. $\varphi^k=\varphi\circ\cdots\circ\varphi$ ($k$-times), $\psi^l=\psi\circ\cdots\circ\psi$ ($l$-times). 
Since the maps $\varphi, \psi$ commute, we denote by 
$\varphi^k\psi^l=\underbrace{\varphi\circ\cdots\circ\varphi}_{k-\text{times}}\circ\underbrace{\psi\circ\cdots\circ\psi}_{l-\text{times}}.$
 In particular, $\varphi^0\psi^0=id$ and $\varphi^1\psi^1=\varphi\psi$.Moreover, if $({D}, \dashv,\vdash  , \varphi, \psi)$ is a regular 
BiHom-associative dialgebra, $\varphi^{-k}\psi^{-l}$ is the inverse of 
$\varphi^{k}\psi^{l}.$ Before defining the derivation of BiHom associative dialgebra, we first provide the definition of the derivation of BiHom-associative algebra.
\begin{definition}
Let $(A,\mu, \varphi, \psi)$ be a BiHom-associative algebra. A linear map ${D} : A\longrightarrow  A$ 
is called an $(\varphi^k, \psi^l)$-derivation of 
$(A,\mu, \varphi, \psi)$, if it satisfies  
$$
{D}\circ\varphi=\varphi\circ {D}\quad \text{and}\quad{D}\circ\psi=\psi\circ {D} 
$$
$$
{D}\circ\mu(x, y)=\mu({D}(x), \varphi^k \psi^l(y))+\mu(\varphi^k \psi^l(x), {D}(y)).
$$
\end{definition}
\begin{definition}
A linear map $d : {A}\longrightarrow {A}$ on a BiHom-associative algebra $A$ is called a differential if 
$d(x\cdot y)=dx\cdot y+x\cdot dy,\, x,y\in A, \, d^2=0$,  $d\circ \varphi=\varphi\circ d$ and $d\circ\psi=\psi\circ d.$
\end{definition} The following proposition provides when a differential BiHom-associative algebra can turn into BiHom-associative dialgebra.
\begin{proposition}
Let $(A, \cdot, \varphi, \psi, d)$ be a differential BiHom-associative algebra (i.e. a BiHom-associative algebra equipped with a differential).
 Consider the products $\dashv$ and $\vdash$ on A given by
 $$x\dashv y=\varphi(x)\cdot dy\quad\mbox{and}\quad x\vdash y=dx\cdot\psi(y).$$
 Then  $(A, \dashv,\vdash, \varphi, \psi)$ is a  BiHom-associative dialgebra, provided that $\varphi$ and $\psi$ idempotant structural maps.
\end{proposition}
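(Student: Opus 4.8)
The plan is to treat this as a direct verification: substitute the two products $x\dashv y=\varphi(x)\cdot dy$ and $x\vdash y=dx\cdot\psi(y)$ into each of the five dialgebra axioms $(\ref{eq1})$--$(\ref{eq5})$ and reduce both sides to a common expression. The toolkit I would use repeatedly is: the morphism property of the structure maps, $\varphi(a\cdot b)=\varphi(a)\cdot\varphi(b)$ and $\psi(a\cdot b)=\psi(a)\cdot\psi(b)$; the commutation relations $d\circ\varphi=\varphi\circ d$, $d\circ\psi=\psi\circ d$ and $\varphi\circ\psi=\psi\circ\varphi$; the nilpotency $d^2=0$; the BiHom-associativity of the underlying product, $(a\cdot b)\cdot\psi(c)=\varphi(a)\cdot(b\cdot c)$; and finally the idempotency $\varphi^2=\varphi$, $\psi^2=\psi$ of the structure maps, whose role is to collapse the repeated applications of $\varphi$ and $\psi$ that the product definitions inevitably generate.

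As an orienting case, consider $(\ref{eq1})$. The left side is $(x\dashv y)\dashv\psi(z)=\varphi(x\dashv y)\cdot d\psi(z)$; by multiplicativity and $d\varphi=\varphi d$ this is $(\varphi^2(x)\cdot\varphi(dy))\cdot\psi(dz)$, idempotency turns $\varphi^2(x)$ into $\varphi(x)$, and one application of BiHom-associativity rebrackets it to $\varphi(x)\cdot(\varphi(dy)\cdot dz)$. The right side $\varphi(x)\dashv(y\dashv z)=\varphi^2(x)\cdot d(\varphi(y)\cdot dz)$ reduces, after using $d^2=0$ to kill the term $\varphi(y)\cdot d^2z$, to the same $\varphi(x)\cdot(\varphi(dy)\cdot dz)$. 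So $(\ref{eq1})$ closes cleanly, and I expect $(\ref{eq5})$, the pure-$\vdash$ mirror, to go through identically with the roles of $\varphi$, $\psi$ and of left/right interchanged.

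The hard part will be the three ``mixed'' axioms $(\ref{eq2})$, $(\ref{eq3})$, $(\ref{eq4})$, where the two different products meet. For instance $(\ref{eq2})$ shares its left side with $(\ref{eq1})$, namely $\varphi(x)\cdot(\varphi(dy)\cdot dz)$, whereas its right side $\varphi(x)\dashv(y\vdash z)$ reduces (again by $d^2=0$) to $\varphi(x)\cdot(dy\cdot\psi(dz))$. Thus the whole axiom comes down to the identity $\varphi(dy)\cdot dz=dy\cdot\psi(dz)$, i.e.\ to trading a left $\varphi$-twist for a right $\psi$-twist on the differentiated arguments. This does not follow from multiplicativity alone, and reconciling it is precisely where the idempotency hypothesis must be brought to bear: my approach would be either to show that $\varphi$ and $\psi$ act as the identity on $\operatorname{im}(d)$, or to convert one twist into the other by a second application of BiHom-associativity combined with $\varphi^2=\varphi$, $\psi^2=\psi$. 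I expect this matching to be the main obstacle of the proof. The axioms $(\ref{eq3})$ and $(\ref{eq4})$ present the same difficulty in the forms $\varphi\psi(y)\cdot dz$ versus $\varphi\psi(y)\cdot\psi(dz)$ and $dy\cdot z$ versus $\psi(dy)\cdot\psi(z)$ respectively, and once the $\varphi/\psi$-on-$\operatorname{im}(d)$ device is established they should both fall out by the same mechanism, completing the verification.
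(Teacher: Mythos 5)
Your reductions are accurate — \eqref{eq1} and \eqref{eq5} close exactly as you say, and the mixed axioms do come down to trading a left $\varphi$-twist for a right $\psi$-twist — but neither of the two devices you propose for closing that gap can work, so the proof as planned stalls precisely where you predict. Device (a) is false: idempotency makes $\varphi$ the identity only on $\operatorname{im}\varphi$, and nothing places $\operatorname{im}(d)$ inside $\operatorname{im}\varphi$. Concretely, take $A=\operatorname{span}\{p,k_1,k_2\}$ with $p\cdot p=p$, $p\cdot k_1=k_1$, $p\cdot k_2=k_2$, $k_1\cdot p=k_2$, all other products zero, $\varphi$ the projection onto $\operatorname{span}\{p\}$, $\psi=\operatorname{id}$, and $d(k_1)=k_2$, $d(p)=d(k_2)=0$: every hypothesis of the proposition holds (including multiplicativity of $\varphi,\psi$), yet $\varphi(dk_1)=0\neq dk_1$. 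Device (b) does settle \eqref{eq3} — there the mismatched factor sits in the \emph{third} slot, so BiHom-associativity plus $\psi^2=\psi$ turns both sides into $(dx\cdot\varphi\psi(y))\cdot\psi(dz)$ — but it cannot settle \eqref{eq2} or \eqref{eq4}: rebracketing \eqref{eq2} as you suggest gives $(x\cdot\varphi(dy))\cdot\psi(dz)$ versus $(x\cdot dy)\cdot\psi(dz)$, so the discrepancy now sits in the \emph{middle} slot, which the BiHom-associativity axiom never modifies (it relates $(a\cdot b)\cdot\psi(c)$ to $\varphi(a)\cdot(b\cdot c)$, leaving $b$ untouched). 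So the three mixed axioms are not ``the same difficulty'': your toolkit handles one of them and provably cannot handle the other two.

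The missing idea is elementary but different in kind: in an \emph{ungraded} differential algebra, $d^2=0$ and the Leibniz rule force $0=d^2(a\cdot b)=2\,da\cdot db$, hence $\operatorname{im}(d)\cdot\operatorname{im}(d)=0$ in characteristic $\neq 2$. This kills both sides of the two recalcitrant axioms: for \eqref{eq2}, $\varphi(dy)\cdot dz=d(\varphi(y))\cdot dz=0$ and $dy\cdot\psi(dz)=dy\cdot d(\psi(z))=0$ (using $d\varphi=\varphi d$, $d\psi=\psi d$), and after one rebracketing the same observation disposes of \eqref{eq4}, where the comparison is $dx\cdot dy$ versus $dx\cdot\psi(dy)=dx\cdot d(\psi(y))$. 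Two further remarks. First, your computation $\varphi(x\dashv y)=\varphi^2(x)\cdot\varphi(dy)$ silently assumes $\varphi,\psi$ are multiplicative; this is not part of the stated definition of a differential BiHom-associative algebra, but it is genuinely needed (the paper's own proof uses it too), so it should be added to the hypotheses. Second, for comparison with the source: the paper verifies only axiom \eqref{eq1}, essentially by your eq1 computation, and declares the rest ``straightforward''; your analysis shows the rest is \emph{not} straightforward, and the verification is only complete once the identity $\operatorname{im}(d)\cdot\operatorname{im}(d)=0$ (or some equivalent extra input) is brought in.
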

\begin{proof}
It is straightforward  to check the BiHom-associativity. 
$$\begin{array}{ll}
\varphi(x)\dashv(y\dashv z)&=\varphi(x)\dashv(\varphi(y)\cdot d(z))\\
&=\varphi\circ\varphi(x)\cdot d(\varphi(y)\cdot d(z))\\
&=(\varphi(x)\cdot d(\varphi(y))\cdot d\psi(z)\\&=(x\dashv y)\dashv \psi(z),
\end{array}$$
for any $x, y, z\in A$.
\end{proof}
\begin{proposition}
Let $({D}, \dashv,\vdash  , \varphi, \psi)$ be a BiHom-associative dialgebra with $\varphi(a)=\psi(a)$, where $a\in A$. Let us define, for any $x,y\in A$,  the linear map 
 $Ad_a^{(\varphi, \psi)} : A\longrightarrow A$ by 
 $$Ad_a^{(\varphi, \psi)}(x)=x\dashv\psi(a)-\varphi(a)\vdash x.$$
Then $Ad_a^{(\varphi, \psi)}$ is a derivation of $A$.
\end{proposition}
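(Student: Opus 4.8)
The plan is to verify directly that $Ad_a^{(\varphi,\psi)}$ meets the three requirements for a derivation of a BiHom-associative dialgebra: that it commutes with $\varphi$ and with $\psi$, and that it satisfies the twisted Leibniz rule $D(x\star y)=D(x)\star\psi(y)+\varphi(x)\star D(y)$ separately for $\star=\dashv$ and $\star=\vdash$. Linearity is clear, so the statement reduces to these computations, for which I would use the identities \eqref{eq1}--\eqref{eq5}, the multiplicativity and idempotency of the structure maps (as in the preceding proposition), and, decisively, the hypothesis $\varphi(a)=\psi(a)$; I write $b:=\varphi(a)=\psi(a)$.

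The commutation is the quickest part. Applying $\varphi$ and using that it is a morphism gives $\varphi\bigl(Ad_a^{(\varphi,\psi)}(x)\bigr)=\varphi(x)\dashv\varphi\psi(a)-\varphi^{2}(a)\vdash\varphi(x)$; since $\varphi\psi=\psi\varphi$, idempotency and the hypothesis yield $\varphi\psi(a)=\varphi^{2}(a)=\varphi(a)=\psi(a)$, so this equals $\varphi(x)\dashv\psi(a)-\varphi(a)\vdash\varphi(x)=Ad_a^{(\varphi,\psi)}(\varphi(x))$. The argument for $\psi$ is symmetric.

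The core of the argument is the Leibniz rule for $\dashv$. Expanding $Ad_a^{(\varphi,\psi)}(x)\dashv\psi(y)+\varphi(x)\dashv Ad_a^{(\varphi,\psi)}(y)$ produces four products: $(x\dashv\psi(a))\dashv\psi(y)$, $(\varphi(a)\vdash x)\dashv\psi(y)$, $\varphi(x)\dashv(y\dashv\psi(a))$, and $\varphi(x)\dashv(\varphi(a)\vdash y)$. Rewriting the first by \eqref{eq1} turns it into $\varphi(x)\dashv(\psi(a)\dashv y)$, and this together with the fourth forms a cancelling pair: by \eqref{eq1} and \eqref{eq2} both $\varphi(x)\dashv(\psi(a)\dashv y)$ and $\varphi(x)\dashv(\varphi(a)\vdash y)$ equal $(x\dashv b)\dashv\psi(y)$, so---precisely because $\psi(a)=\varphi(a)=b$---they sum to zero. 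The two surviving products are simplified by \eqref{eq3} and \eqref{eq1}; here idempotency collapses $\varphi^{2}(a)$ to $\varphi(a)$ and $\psi^{2}(a)$ to $\psi(a)$, leaving $-\varphi(a)\vdash(x\dashv y)$ and $(x\dashv y)\dashv\psi(a)$, whose sum is exactly $Ad_a^{(\varphi,\psi)}(x\dashv y)$.

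The Leibniz rule for $\vdash$ is proved by the identical scheme, now drawing on \eqref{eq3}, \eqref{eq4}, and \eqref{eq5}. The analogous cancelling pair is $\varphi(x)\vdash(\psi(a)\vdash y)$ and $\varphi(x)\vdash(\varphi(a)\vdash y)$, which are literally equal once $\psi(a)=\varphi(a)$, while the survivors recombine into $(x\vdash y)\dashv\psi(a)-\varphi(a)\vdash(x\vdash y)=Ad_a^{(\varphi,\psi)}(x\vdash y)$. I expect the only real obstacle to be organisational: matching each of the eight products to the unique axiom that rewrites it and tracking which structure map attaches to which argument. The single non-mechanical step is the cross-term cancellation, and it is here that $\varphi(a)=\psi(a)$ is indispensable---without it the paired terms carry genuinely different inner products and the Leibniz identity would break down.
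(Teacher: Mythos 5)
Your computations are sound, and your overall scheme---expand the four products, cancel the cross pair using $\varphi(a)=\psi(a)$, recombine the two survivors via the dialgebra axioms---is exactly the paper's scheme. But you and the paper verify \emph{different} Leibniz identities, and the difference is substantive. The paper checks the untwisted rule $Ad_a^{(\varphi,\psi)}(x\dashv y)=Ad_a^{(\varphi,\psi)}(x)\dashv y+x\dashv Ad_a^{(\varphi,\psi)}(y)$, which is what its own definition of a derivation (the $(\varphi^0,\psi^0)$ case, where the same power $\varphi^k\psi^l$ twists both terms) requires; it does so only for $\dashv$, and its manipulation silently discards the twists---for instance it rewrites $x\dashv(y\dashv\psi(a))$ by \eqref{eq1}, although that axiom puts $\varphi(x)$, not bare $x$, in the left slot, and rewrites $(\varphi(a)\vdash x)\dashv y$ by \eqref{eq3}, which needs $\psi(\cdot)$ in the right slot. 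You instead verify the mixed-twist rule $D(x\star y)=D(x)\star\psi(y)+\varphi(x)\star D(y)$, which is not among the paper's $(\varphi^k,\psi^l)$-derivation identities, and you import multiplicativity and idempotency of $\varphi,\psi$ from the \emph{preceding} proposition on differential BiHom-associative algebras---hypotheses the present proposition does not grant. What your route buys is rigor: every rewriting you perform is actually licensed by \eqref{eq1}--\eqref{eq5}, the collapse of $\varphi^2(a)$ and $\psi^2(a)$ is justified by your idempotency assumption, and you additionally check the $\vdash$ rule and the commutation conditions $D\varphi=\varphi D$, $D\psi=\psi D$, all of which the paper omits. What it costs is fidelity to the statement: measured against the paper's own definition of derivation, you have proved a variant claim under extra hypotheses. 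The honest conclusion your care makes visible is that the proposition as literally stated (no multiplicativity, no idempotency, untwisted Leibniz rule) does not follow from the axioms by either computation; either the twisted formulation you chose or auxiliary assumptions of the kind you imported appear genuinely necessary for a correct proof.
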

\begin{proof}
We have 
$$\begin{array}{ll}
Ad_a^{(\varphi, \psi)}(x)\dashv y+x\dashv Ad_a^{(\varphi, \psi)}(y)
&=(x\dashv\psi(a)-\varphi(a)\vdash x)\dashv y+x\dashv(y\dashv\psi(a)-\varphi(a)\vdash y)\\
&=(x\dashv\psi(a))\dashv y-(\varphi(a)\vdash x)\dashv y+x\dashv(y\dashv\psi(a))-x\dashv(\varphi(a)\vdash y)\\
&=(x\dashv y)\dashv\psi(a)-\varphi(a)\vdash (x\dashv y)\\&=Ad_a^{(\varphi, \psi)}(x\dashv y).
\end{array}$$
So, $Ad_a^{(\varphi, \psi)}$ is a derivation.
\end{proof}
 Like the BiHom-associative algebra case, we define $(\varphi^k, \psi^l)$-derivation for a BiHom-associative dialgebra as follows.
\begin{definition}
Let $({A}, \dashv, \vdash, \varphi, \psi)$ be a BiHom-associative dialgebra. A linear map ${D} : {A}\rightarrow {A}$ is called a 
$(\varphi^k, \psi^l)$-derivation of ${A}$ if it satisfies 
\begin{eqnarray}
 {D}\circ\varphi=\varphi\circ{D},\,{D}\circ\psi=\psi\circ{D},\\
	{D}(x\dashv y)=\varphi^k\psi^l(x)\dashv{D}(y)+{D}(x)\dashv\varphi^k\psi^l(y),\\
 {D}(x\vdash y)=\varphi^k\psi^l(x)\vdash {D}(y)+{D}(x)\vdash\varphi^k\psi^l(y),
\end{eqnarray}
for $x, y\in   {A}.$ The set of all $(\varphi^k,\psi^l)$-derivations of ${A}$ is denoted by $Der_{(\varphi^k, \psi^l)}({A})$  Where the space of all derivation on BiHom associative dialgebra is denoted by $Der({A}):=\displaystyle\bigoplus_{k\geq 0}\displaystyle\bigoplus_{l\geq 0}Der_{(\varphi^k, \psi^l)}({A})$.
\end{definition} 
\begin{example}
Consider the $3$-dimensional BiHom-associative dialgebra given in  \cite{AI} with basis 
$\left\{e_1, e_2, e_3\right\}$, where the multiplication and structure maps are defined as \begin{eqnarray*}
   & e_2\dashv e_2=ae_1,~~
    e_2\dashv e_3=be_1,~~
    e_3\dashv e_2=ce_1,\\&
     e_2\vdash e_1=e_1,~~
     e_2\vdash e_2=de_1,~~
     e_3\vdash e_2=fe_1,\\& 
      e_1\dashv e_2=e_2\dashv e_1=e_1,
\end{eqnarray*}
and  
$$\varphi(e_2)=e_1,\, \psi(e_2)=e_1,\, \psi(e_3)=be_3.$$
 A direct computation gives the following derivation: 
\begin{eqnarray}
 {D}_{\varphi\psi}(e_1)&=&-\frac{(af-cd)d_{12}}{f}e_2-\frac{(af-cd)d_{12}}{f}e_3,\nonumber\\
 {D}_{\varphi\psi}(e_2)&=&d_{22}e_2+d_{23}e_3,\nonumber\\
{D}_{\varphi\psi}(e_3)&=&-\frac{dd_{22}}{f}e_2-\frac{dd_{23}}{f}e_3.\nonumber
\end{eqnarray}
\end{example}
\begin{proposition}\label{p7}
For any ${D}\in Der_{(\varphi^k, \psi^l)}(A)$ and ${D}'\in Der_{(\varphi^{k'},\psi^{l'})}(A)$, we have 
$$\left[{D},{D'}\right]\in Der_{(\varphi^{k+k'}, \psi^{l+l'})}(A).$$
\end{proposition}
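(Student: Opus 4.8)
The plan is to verify directly that the commutator $[D,D']=D\circ D'-D'\circ D$ satisfies the three defining conditions of a $(\varphi^{k+k'},\psi^{l+l'})$-derivation. First I would dispose of the commutation relations with the structure maps: since $D\circ\varphi=\varphi\circ D$ and $D'\circ\varphi=\varphi\circ D'$, the composite $D\circ D'$ commutes with $\varphi$, and likewise $D'\circ D$ does, so their difference $[D,D']$ commutes with $\varphi$; the identical argument handles $\psi$. This step is immediate and purely formal.

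The substance lies in checking the Leibniz-type identity for $\dashv$ (and then, by the same computation with $\vdash$ in place of $\dashv$, for $\vdash$). The approach is to expand $D\bigl(D'(x\dashv y)\bigr)$ by first applying the $(\varphi^{k'},\psi^{l'})$-derivation rule for $D'$ and then the $(\varphi^{k},\psi^{l})$-rule for $D$ to each of the two resulting terms, using the commutation relations $D\circ\varphi^{k'}\psi^{l'}=\varphi^{k'}\psi^{l'}\circ D$ to move powers of the structure maps past $D$. This produces four terms: one with $D\circ D'$ on the left factor paired with $\varphi^{k+k'}\psi^{l+l'}$ on the right, one with $D\circ D'$ on the right factor, and two genuine cross terms of the shape $D(x)\dashv D'(y)$-type (with appropriate powers of $\varphi,\psi$ attached). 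I would then expand $D'\bigl(D(x\dashv y)\bigr)$ in exactly the same way and subtract.

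The key point — and the step where I expect the only real care to be needed — is that the two cross terms are symmetric in $D$ and $D'$ and therefore cancel in the difference, leaving precisely
\[
[D,D'](x\dashv y)=\varphi^{k+k'}\psi^{l+l'}(x)\dashv [D,D'](y)+[D,D'](x)\dashv\varphi^{k+k'}\psi^{l+l'}(y).
\]
Making this cancellation rigorous requires that the powers of $\varphi$ and $\psi$ decorating the cross terms match up identically in the two expansions, which is exactly where the commutation relations $D\circ\varphi=\varphi\circ D$ etc.\ are used, together with $\varphi\psi=\psi\varphi$ to freely reorder the composite $\varphi^{k+k'}\psi^{l+l'}$. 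I would track the exponents carefully to confirm that, for instance, the term $\varphi^{k'}\psi^{l'}(x)\dashv D(y)$ produced after applying $D$ to $\varphi^{k'}\psi^{l'}(x)\dashv D'(y)$ really carries the total weight $k+k'$, $l+l'$.

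Since equations \eqref{eq1}–\eqref{eq5} of Definition \ref{dia} are not invoked in this computation at all, the argument never uses BiHom-associativity itself; it is a formal consequence of the derivation axioms and the commutativity of the structure maps. Consequently the $\vdash$ case is obtained verbatim by replacing every $\dashv$ with $\vdash$, so I would write out the $\dashv$ computation in full and then remark that the $\vdash$ identity follows by the identical argument, completing the proof that $[D,D']\in Der_{(\varphi^{k+k'},\psi^{l+l'})}(A)$.
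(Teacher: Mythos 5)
Your proposal is correct and follows essentially the same route as the paper's own proof: expand $D\circ D'$ and $D'\circ D$ applied to the product via the two derivation rules, use the commutation relations $D\circ\varphi=\varphi\circ D$, $D\circ\psi=\psi\circ D$ (and likewise for $D'$) to normalize the powers of the structure maps, observe that the cross terms are symmetric in $D,D'$ and cancel in the difference, and handle $\dashv$ and $\vdash$ by one and the same computation. Your added remark that BiHom-associativity is never invoked is accurate and consistent with the paper's argument, which likewise uses only the derivation axioms and the commutativity of $\varphi$ and $\psi$.
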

\begin{proof}
For $x, y\in A$, we have 
$$\begin{array}{ll}
\left[{D}, {D'}\right]\circ\mu(x, y)
&={D}\circ{D'}\circ\mu(x, y)-{D'}\circ{D}\circ\mu(x, y)\\
&={D}(\mu({D}'(x), \varphi^k\psi^l(y))+\mu(\varphi^k\psi^l(x),{D}'(y)))\\
&\quad-{D}'(\mu({D}(x), \varphi^k\psi^l(y))+\mu(\varphi^k\psi^l(x),{D}(y)))\\
&=\mu({D}\circ{D'}(x),\varphi^{k+k'}\psi^{l+l'}(y))+\mu(\varphi^k\psi^l\circ{D'}(x),{D}\circ\varphi^k\psi^l(y))\\
&\quad+\mu({D}\circ\varphi^k\psi^l(x),\varphi^k\psi^l\circ{D}'(y))+\mu(\varphi^{k+k'}\psi^{l+l'}(x),{D}\circ{D'}(y))\\
&\quad-\mu({D'}\circ{D}(x),\varphi^{k+k'}\psi^{l+l'}(y))-\mu(\varphi^k\psi^l\circ{D}(x),{D'}\circ\varphi^k\psi^l(y))\\
&\quad-\mu({D'}\circ\varphi^k\psi^l(x),\varphi^k\psi^l{D}(y))-\mu(\varphi^{k+k'}\psi^{l+l'}(x),{D'}\circ{D}(y)).
\end{array}$$
Since ${D}$ and ${D}'$ satisfy 
${D}\circ\varphi=\varphi\circ{D},\,{D}'\circ\varphi=\varphi\circ{D}'$,\,  
${D}\circ\psi=\psi\circ{D},\,{D}'\circ\psi=\psi\circ{D}'$.\\ We obtain  
$\varphi^k\psi^l\circ{D'}={D'}\circ\varphi^k\psi^l,\, {D}\circ\varphi^{k'}\psi^{l'}=\varphi^{k'}\psi^{l'}\circ{D}.$
Therefore, we get\\
$$\left[{D},{D'}\right]\circ\mu(x, y)=\mu(\varphi^{k+k'}\psi^{l+l'}(x),\left[{D},{D'}\right](y))+
\mu(\left[{D},{D'}\right](x),\varphi^{k+k'}\psi^{l+l'}(y)).$$
Furthermore, it is straightforward to see that 
$$\begin{array}{ll}
\left[{D},{D'}\right]\circ\varphi
&={D}\circ{D'}\circ\varphi-{D'}\circ{D}\circ\varphi\\
&=\varphi\circ{D}\circ{D}'-\varphi\circ{D}'\circ{D}=\varphi\circ\left[{D},{D'}\right].
\end{array}$$ 
$$\begin{array}{ll}
\left[{D},{D'}\right]\circ\psi
&={D}\circ{D'}\circ\psi-{D'}\circ{D}\circ\psi\\
&=\psi\circ{D}\circ{D}'-\psi\circ{D}'\circ{D}=\psi\circ\left[{D},{D'}\right]
\end{array}$$
which yields that $\left[{D},{D'}\right]\in Der_{(\varphi^{k+k'}, \psi^{l+l'})}(A)$ by considering $\mu=\dashv$ and $\mu=\vdash$ respectively.
\end{proof}
 
\begin{proposition}
The space $Der_{(\varphi^{k}, \psi^{l})}(A)$ is an invariant of the triple BiHom-associative dialgebra A.
\end{proposition}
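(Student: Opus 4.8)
The plan is to read ``invariant'' as invariance under isomorphism: if $f\colon(A,\dashv,\vdash,\varphi,\psi)\to(A',\dashv',\vdash',\varphi',\psi')$ is an isomorphism of BiHom-associative dialgebras, then conjugation by $f$ ought to carry $Der_{(\varphi^k,\psi^l)}(A)$ bijectively onto $Der_{((\varphi')^k,(\psi')^l)}(A')$, so that the derivation space depends only on the isomorphism type of $A$. Accordingly I would introduce the linear map $\Phi\colon Der_{(\varphi^k,\psi^l)}(A)\to\End(A')$ given by $\Phi(D)=f\circ D\circ f^{-1}$ and verify that its image lands in $Der_{((\varphi')^k,(\psi')^l)}(A')$.

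First I would extract the consequences of the morphism relations $\varphi'\circ f=f\circ\varphi$ and $\psi'\circ f=f\circ\psi$. Since $f$ is invertible these rewrite as $f^{-1}\circ\varphi'=\varphi\circ f^{-1}$ and $f^{-1}\circ\psi'=\psi\circ f^{-1}$, and by an easy induction one obtains the intertwining identity $f\circ\varphi^k\psi^l=(\varphi')^k(\psi')^l\circ f$. Combining these with $D\circ\varphi=\varphi\circ D$ and $D\circ\psi=\psi\circ D$, a one-line computation gives $\Phi(D)\circ\varphi'=\varphi'\circ\Phi(D)$ and similarly for $\psi'$, so the first defining condition of a derivation holds for $\Phi(D)$.

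The substantive part is the two Leibniz-type identities. Writing arbitrary elements of $A'$ as $a=f(x)$ and $b=f(y)$ and using multiplicativity $f(x\dashv y)=f(x)\dashv'f(y)$, I would compute
$$\Phi(D)(a\dashv' b)=f\,D(x\dashv y)=f\!\left(\varphi^k\psi^l(x)\dashv D(y)+D(x)\dashv\varphi^k\psi^l(y)\right),$$
then distribute $f$ across $\dashv$ and apply the relations $f\circ\varphi^k\psi^l=(\varphi')^k(\psi')^l\circ f$ and $f\circ D=\Phi(D)\circ f$ to reassemble the right-hand side as $(\varphi')^k(\psi')^l(a)\dashv'\Phi(D)(b)+\Phi(D)(a)\dashv'(\varphi')^k(\psi')^l(b)$. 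The identical argument with $\vdash$ in place of $\dashv$ settles the second Leibniz identity, whence $\Phi(D)\in Der_{((\varphi')^k,(\psi')^l)}(A')$.

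Finally, $\Phi$ is manifestly linear and possesses the explicit inverse $D'\mapsto f^{-1}\circ D'\circ f$, whose image lies in $Der_{(\varphi^k,\psi^l)}(A)$ by the same computation applied to the inverse isomorphism $f^{-1}$; hence $\Phi$ is a vector-space isomorphism and the derivation space is an isomorphism invariant. I expect the only delicate point to be the bookkeeping of the composed structure maps, namely the inductive identity $f\circ\varphi^k\psi^l=(\varphi')^k(\psi')^l\circ f$; once that is in hand, the remaining steps are direct substitutions.
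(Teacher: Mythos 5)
Your core argument is exactly the paper's: the paper takes an isomorphism $\sigma$ of dialgebras, conjugates, $D\mapsto\sigma\circ D\circ\sigma^{-1}$, checks that the result is again a $(\varphi^k,\psi^l)$-derivation, and concludes that conjugation is a linear bijection between the derivation spaces. On that shared part your write-up is in fact the more careful one: you distinguish the structure maps $(\varphi,\psi)$ of $A$ from $(\varphi',\psi')$ of $A'$ and prove the intertwining identity $f\circ\varphi^k\psi^l=(\varphi')^k(\psi')^l\circ f$, whereas the paper silently writes the same symbols $\varphi^k,\psi^l$ for both algebras and its displayed computation mixes up $\dashv$ and $\vdash$ along the way. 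The genuine difference is in what each of you means by ``invariant.'' You stop at a vector-space isomorphism. The paper wants more: after the conjugation step it endows the derivation space with a trace product $\dashv_{tr}$ (in the spirit of the diassociative-derivation literature it cites, where $D_1\dashv_{tr}D_2$ is built from expressions like $tr(D_1)\,(D_2\dashv D_3)$), and it verifies that the conjugation map $\Psi$ also preserves this product, using $tr(\sigma\circ D\circ\sigma^{-1})=tr(D)$; only then does it call $\Psi$ an isomorphism of ``triple'' structures. So to match the paper's full claim you would need one extra, easy verification on top of your argument, namely $\Phi(D_1\dashv_{tr}D_2)=\Phi(D_1)\dashv_{tr}\Phi(D_2)$, which follows from the conjugation-invariance of the trace together with the multiplicativity you already use; given how vaguely the proposition is phrased, your reading is defensible, but it is strictly weaker than what the paper's proof sets out to show.
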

\begin{proof}
Let $\sigma :  (A, \dashv_A, \vdash_A,  \varphi^k, \psi^l)\longrightarrow (B, \dashv_B, \vdash_B, \varphi^k, \psi^l)$  be a triple BiHom-associative dialgebra isomorphism
and let ${D}$ be a $(\varphi^k, \psi^l)$-derivation of A. Then for any $x, y, z\in B$. We have :  
$$\begin{array}{ll}
\sigma{D}\sigma^{-1}\circ(((x)\dashv_B(y))\dashv_B (z))
&=\sigma{D}\circ((\sigma^{-1}(x)\dashv_A\sigma^{-1}(y))\dashv_A\sigma^{-1}(z))\\
&=\sigma({D}\circ\sigma^{-1}(x)\vdash_A\sigma^{-1}\circ\varphi^s\psi^r(y))\vdash_A\sigma^{-1}\circ\varphi^k\psi^l(z))\\
&+\sigma(\sigma^{-1}\circ\varphi^k\psi^l(x)\vdash_A{D}\circ\sigma^{-1}(y))\vdash_A\sigma^{-1}\circ\varphi^k\psi^l(z))\\
&+\sigma(\sigma^{-1}\circ\varphi^k\psi^l(x)\vdash_A\sigma^{-1}\circ\varphi^k\psi^l(y)\vdash_A{D}\circ\sigma^{-1}(z))\\
&=({D}\circ\sigma^{-1}(x)\dashv_B\varphi^k\psi^l(y))\dashv_B\varphi^k\psi^l(z))\\
&+(\varphi^k\psi^l(x)\dashv_B\sigma\circ{D}\circ\sigma^{-1}(y))\dashv_B\varphi^k\psi^l(z))\\
&+(\varphi^k\psi^l(x)\dashv_B\varphi^k\psi^l(y))\dashv_B{D}\circ\sigma^{-1}(z)).
\end{array}$$
Thus $\sigma\circ{D}\circ\sigma^{-1}$ is a $(\varphi^k, \psi^l)$-derivation of $B$. Hence the mapping
$\psi : Der_{(\alpha^{k},\beta^{l})}(A)\longrightarrow Der_{(\varphi^k, \psi^{l})}(B)$, defined by ${D}\longmapsto \sigma{D}\sigma^{-1}$
is an isomorphism of triple  BiHom-associative dialgebras. 

In fact, it is easy to see that $\psi$ is linear. Moreover, let 
${D}_1, {D}_2, {D}_3$ be derivations of $A$ : 
$$\begin{array}{ll}
&\varphi^k\psi^l\circ\Psi({D}_1\dashv_{tr}{D}_2)\dashv_{tr}{D}_3)\\
&=\varphi^k\psi^l\Psi(tr({D}_1)({D}_2\dashv{D}_3))+\varphi^k\psi^l\Psi(tr({D}_3)({D}_1{D}_2))
+\varphi^k\psi^l\Psi(tr({D}_2)({D}_3\dashv {D}_1))\\
&=\varphi^k\psi^l tr({D}_1)\Psi({D}_2\dashv{D}_3)
+\varphi^k\psi^l tr({D}_3)\Psi({D}_1\dashv{D}_2)+\varphi^k\psi^l tr({D}_2)\Psi({D}_3\dashv{D}_1)\\
&=\varphi^k\psi^l tr(\Psi({D}_1))(\Psi({D}_2)\dashv\Psi({D}_3))+\varphi^k\psi^l tr(\Psi({D}_3))(\Psi({D}_1)\dashv\psi({D}_2))\\
&+\varphi^k\psi^l tr(\Psi({D}_2))\Psi((\Psi({D}_3)\dashv\Psi({D}_1)),
\end{array}$$
since $\psi$ is a morphism of the $Der_{(\varphi^k, \psi^l)}(A)$ and $Der_{(\varphi^k, \psi^l)}(B)$, and 
$tr({D})=tr(\sigma\circ{D}\circ\sigma^{-1}).$\\
Then $\varphi^k\psi^l\Psi(({D}_1\dashv_{tr}{D}_2)\dashv_{tr}{D}_3))=\varphi^k\psi^l((\Psi({D}_1)\dashv_{tr}\Psi({D}_2))\dashv_{tr}\Psi({D}_3)).$
\end{proof}

\begin{definition}
Let $({A}, \dashv, \vdash, \varphi, \psi)$ be a BiHom-associative dialgebra and $\alpha, \beta, \gamma$ elements of $\mathbb{C}$. 
A linear map ${D\in End(A)}$ is a generalized 
$(\varphi^k, \psi^l)$-derivation or a $(\alpha, \beta, \gamma)$-$(\varphi^k, \psi^l)$-derivation of ${A}$ if for all $x, y\in {A}$, we have  

\begin{eqnarray}\label{rd0}
\varphi\circ D=D\circ\varphi&,& \psi\circ D=D\circ\psi\\
\alpha({D}(x\dashv y))&=&\beta(\varphi^k\psi^l(x)\dashv{D}(y))+\gamma({D}(x)\dashv\varphi^k\psi^l(y))\\
\alpha({D}(x\vdash y))&=&\beta(\varphi^k\psi^l(x)\vdash {D}(y))+\gamma({D}(x)\vdash\varphi^k\psi^l(y)).
\end{eqnarray}
We denote the set of all $(\alpha, \beta, \gamma)$-$(\varphi^k, \psi^l)$-derivation by $Der_{(\varphi^k, \psi^l)}^{(\alpha, \beta, \gamma)}({A})$. 
 Moreover, we have the following space 
 $$Der^{(\alpha, \beta, \gamma)}({A})=\left\{D\in Der_{(\varphi^k, \psi^l)}^{(\alpha, \beta, \gamma)}({A})|k, 
l\in \mathbb{Z}\right\}.$$
\end{definition}

\begin{proposition}
For any $D\in Der_{(\varphi^k, \psi^l)}^{(\alpha, \beta, \gamma)}({A})$ and $D'\in {Der}_{(\varphi^{k'}, \psi^{l'})}^{(\alpha^{'}, \beta^{'}, \gamma^{'})}({A})$, we can define the commutator $[D,D']={D}\circ{D'}-{D'}\circ{D}$, such that $[D,D']\in {Der}_{(\varphi^{k+k'}, \psi^{l+l'})}^{(\alpha \alpha ^{'}, \beta \beta ^{'}, \gamma \gamma ^{'})}({A})$ for all $\alpha,\beta , \gamma, \alpha',\beta' , \gamma'\in \mathbb{C}$ where $k,k',l,l'\in \mathbb{Z}$.
\end{proposition}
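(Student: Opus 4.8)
The plan is to check directly the three conditions defining a $(\alpha\alpha',\beta\beta',\gamma\gamma')$-$(\varphi^{k+k'},\psi^{l+l'})$-derivation, following the pattern of Proposition \ref{p7}. The two commutation conditions are immediate: since $D$ and $D'$ each commute with $\varphi$ and with $\psi$, so do the composites $D\circ D'$ and $D'\circ D$, and hence so does their difference $[D,D']$. (If some of the exponents $k,k',l,l'$ are negative we work in a regular BiHom-associative dialgebra, so that $\varphi^{k}\psi^{l}$ is defined and still commutes with $D$ and $D'$.) It therefore remains to establish the two weighted Leibniz identities; I would treat $\mu=\dashv$ in detail, the case $\mu=\vdash$ being word-for-word identical after replacing $\dashv$ by $\vdash$ throughout.

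First I would expand $\alpha\alpha'\,D\circ D'(\mu(x,y))$. Applying the $(\alpha',\beta',\gamma')$-rule for $D'$ to $\mu(x,y)$, then the linear map $D$, and finally the $(\alpha,\beta,\gamma)$-rule for $D$ to each of the two resulting summands, while using $D\circ\varphi^{k'}\psi^{l'}=\varphi^{k'}\psi^{l'}\circ D$ to slide the structure maps past $D$, I obtain the four-term expression
\[
\alpha\alpha'\,DD'(\mu(x,y)) = \beta\beta'\,\mu(\varphi^{k+k'}\psi^{l+l'}(x),DD'(y)) + \gamma\gamma'\,\mu(DD'(x),\varphi^{k+k'}\psi^{l+l'}(y)) + \beta'\gamma\,\mu(\varphi^{k'}\psi^{l'}D(x),\varphi^{k}\psi^{l}D'(y)) + \beta\gamma'\,\mu(\varphi^{k}\psi^{l}D'(x),\varphi^{k'}\psi^{l'}D(y)).
\]
Interchanging the roles of $D$ and $D'$ (and simultaneously $k\leftrightarrow k'$, $l\leftrightarrow l'$, $\beta\leftrightarrow\beta'$, $\gamma\leftrightarrow\gamma'$) gives the analogous expansion of $\alpha\alpha'\,D'D(\mu(x,y))$, whose two leading terms carry $D'D$ in place of $DD'$ and whose two mixed terms are $\beta\gamma'\,\mu(\varphi^{k}\psi^{l}D'(x),\varphi^{k'}\psi^{l'}D(y))$ and $\beta'\gamma\,\mu(\varphi^{k'}\psi^{l'}D(x),\varphi^{k}\psi^{l}D'(y))$.

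The key observation, and the only place the argument could go wrong, is that the two mixed terms of the $D'D$-expansion are \emph{identical} to those of the $DD'$-expansion: the scalar coefficients agree because multiplication in $\mathbb{C}$ is commutative ($\beta'\gamma=\gamma\beta'$ and $\beta\gamma'=\gamma'\beta$), and the two arguments of $\mu$ coincide literally. Hence on forming the difference $\alpha\alpha'\,(DD'-D'D)(\mu(x,y))$ these four mixed terms cancel in pairs, leaving exactly
\[
\alpha\alpha'\,[D,D'](\mu(x,y)) = \beta\beta'\,\mu(\varphi^{k+k'}\psi^{l+l'}(x),[D,D'](y)) + \gamma\gamma'\,\mu([D,D'](x),\varphi^{k+k'}\psi^{l+l'}(y)).
\]

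This is precisely the $(\alpha\alpha',\beta\beta',\gamma\gamma')$-$(\varphi^{k+k'},\psi^{l+l'})$-Leibniz identity for $\mu=\dashv$; repeating the computation verbatim with $\mu=\vdash$ supplies the remaining identity, and together with the commutation conditions this yields $[D,D']\in Der_{(\varphi^{k+k'},\psi^{l+l'})}^{(\alpha\alpha',\beta\beta',\gamma\gamma')}(A)$. I expect the bookkeeping of the exponents on $\varphi,\psi$—making sure the structure maps are pushed to the correct slot in each of the four terms—to be the only delicate step, while the cancellation of the mixed terms is the conceptual heart of the argument. I would also point out that no division by $\alpha$ or $\alpha'$ is ever needed, since I work throughout with the relations multiplied by these scalars; the conclusion therefore holds even when $\alpha$ or $\alpha'$ vanishes.
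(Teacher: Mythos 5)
Your proof is correct and follows essentially the same route as the paper: the paper's own proof is just the remark that the argument is ``similar to Proposition \ref{p7}'', i.e.\ the direct expansion of $[D,D']\circ\mu$ with cancellation of the mixed terms, which is exactly what you carry out, with the scalars $\beta'\gamma$ and $\beta\gamma'$ correctly tracked so that the cross terms cancel in pairs. Your additional observations (regularity needed for negative exponents, and that no division by $\alpha,\alpha'$ occurs) are sound refinements of the same argument rather than a different approach.
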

\begin{proof}
The proof is similar to Proposition \ref{p7}.
\end{proof}
Now consider the $2-$ and $3-$ dimensional BiHom-associative dialgebras structures taken from our previous paper \cite{AI}.
\begin{center}
 \begin{tabular}{||c||c||c||c||c||c||c||c||c||c||c||c||}
\hline
$Algebras$&Multiplications &Morphisms $\varphi,\psi$.
\\ \hline 
$\mathcal{A}lg_1$&
$\begin{array}{ll}  
e_1\dashv e_2=ae_1,
e_2\dashv e_1=be_1,\\
e_1\vdash e_2=ce_1,
e_2\vdash e_1=de_1,
e_2\vdash e_2=fe_1,
\end{array}$
&
$\begin{array}{ll}  
\varphi(e_2)=e_1,\\
\psi(e_2)=e_1.
\end{array}$
\\ \hline 
$\mathcal{A}lg_2$
&
$\begin{array}{ll}  
e_1\dashv e_2=ae_1,
e_2\dashv e_1=ae_1,\\
e_2\dashv e_2=e_1,
e_1\vdash e_2=e_1,
e_2\vdash e_1=e_1,
\end{array}$
&
$\begin{array}{ll}  
\varphi(e_2)=e_1,\\
\psi(e_2)=e_1.
\end{array}$
\\ \hline
$\mathcal{A}lg_3$
&
$\begin{array}{ll}   
e_1\dashv e_2=ae_1,
e_1\vdash e_2=be_1,\\
e_2\vdash e_1=ce_1,
e_2\vdash e_2=de_1,
\end{array}$
&
$\begin{array}{ll}  
\varphi(e_2)=e_1,\\
\psi(e_2)=e_1.
\end{array}$
\\ \hline  
$\mathcal{A}lg_4$
&
$\begin{array}{ll}   
e_1\dashv e_2=e_1,
e_2\dashv e_1=e_1,
e_2\dashv e_2=ae_1,\\
e_1\vdash e_2=be_1,
e_2\vdash e_1=ce_1,
e_2\vdash e_2=de_1,
\end{array}$
&
$\begin{array}{ll}  
\varphi(e_2)=e_1,\\
\psi(e_2)=e_1.
\end{array}$
\\ \hline  
\end{tabular}
\end{center}
\begin{center}
   \begin{tabular}{||c||c||c||c||c||c||c||c||c||c||c||c||}
\hline
$Algebras$&Multiplications&Morphisms $\varphi,\psi$
\\ \hline 
$\mathcal{A}lg_1$&
$\begin{aligned}
&e_1\dashv e_2=e_1,&\\
&e_2\dashv e_1=e_1,&\\
&e_2\dashv e_2=ae_1,&
\end{aligned}
\begin{aligned}
&e_2\dashv e_3=be_1,&\\
&e_3\dashv e_2=ce_1,&\\
&e_2\vdash e_1=e_1,&
\end{aligned}
\begin{aligned}
&e_2\vdash e_2=de_1,&\\
&e_3\vdash e_2=fe_1,&
\end{aligned}$
&
$\begin{array}{ll}  
\varphi(e_2)=e_1,\\  
\psi(e_2)=e_1,\\
\psi(e_3)=be_3.
\end{array}$
\\ \hline 
$\mathcal{A}lg_2$&
$\begin{aligned}
&e_1\dashv e_2=e_1,&\\
&e_2\dashv e_1=e_1,&\\
&e_2\dashv e_2=e_1,&
\end{aligned}
\begin{aligned}
&e_2\dashv e_3=e_1,&\\
&e_3\dashv e_2=e_1,&\\
&e_1\vdash e_2=e_1,&
\end{aligned}
\begin{aligned}
&e_2\vdash e_1=e_1,&\\
&e_2\vdash e_2=e_1,&\\
&e_3\vdash e_2=e_1,&\\
\end{aligned}$
&
$\begin{array}{ll}  
\varphi(e_2)=e_1,\\  
\psi(e_2)=e_1,\\
\psi(e_3)=be_3.
\end{array}$
\\ \hline  
$\mathcal{A}lg_3$&
$\begin{aligned}
&e_1\dashv e_2=e_1,&\\
&e_2\dashv e_1=e_1,&\\
&e_2\dashv e_2=e_1,&
\end{aligned}
\begin{aligned}
&e_2\dashv e_3=e_1,&\\
&e_3\dashv e_2=e_1,&\\
&e_1\vdash e_2=e_1,&
\end{aligned}
\begin{aligned}
&e_2\vdash e_2=e_1,&\\
&e_2\vdash e_3=e_1,&\\
&e_3\vdash e_2=e_1,&\\
\end{aligned}$
&
$\begin{array}{ll}  
\varphi(e_2)=e_1,\\  
\psi(e_2)=e_1,\\
\psi(e_3)=be_3.
\end{array}$
 \\ \hline
$\mathcal{A}lg_4$&
$\begin{aligned}
&e_1\dashv e_2=e_1,&\\
&e_2\dashv e_1=e_1,&\\
&e_2\dashv e_2=e_1,&
\end{aligned}
\begin{aligned}
&e_2\dashv e_3=e_1,&\\
&e_1\vdash e_2=e_1,&\\
&e_2\vdash e_1=e_1,&
\end{aligned}
\begin{aligned}
&e_2\vdash e_2=e_1,&\\
&e_2\vdash e_3=e_1,&\\
&e_3\vdash e_2=e_1,&\\
\end{aligned}$
&
$\begin{array}{ll}  
\varphi(e_2)=e_1,\\  
\psi(e_2)=e_1,\\
\psi(e_3)=be_3.
\end{array}$
\\ \hline 
$\mathcal{A}lg_5$&
$\begin{aligned}
&e_1\dashv e_2=e_1,&\\
&e_2\dashv e_1=e_1,&\\
&e_2\dashv e_2=e_1,&
\end{aligned}
\begin{aligned}
&e_2\dashv e_3=e_1,&\\
&e_1\vdash e_2=e_1,&\\
&e_2\vdash e_1=e_1,&
\end{aligned}
\begin{aligned}
&e_2\vdash e_3=e_1,&\\
&e_3\vdash e_2=e_1,&\\
\end{aligned}$
&
$\begin{array}{ll}  
\varphi(e_2)=e_1,\\  
\psi(e_2)=e_1,\\
\psi(e_3)=be_3.
\end{array}$
\\ \hline
\end{tabular}
\end{center} Using the above tables and derivation structure of BiHom associative dialgebra, we evaluate the classification of derivations on $2-$  and $3-$ dimensional BiHom-associative dialgebras in the next subsections. We use Mathematicae software with Computer Algebra programs to perform our calculations.
\subsection{The Classification of Derivations of BiHom-associative dialgebras}

\begin{itemize}
    \item The classification table for $2$-dimensional derivation is given as follows:
\begin{center} \begin{tabular}{||c||c||c||c||c||c||c||c||c||c||c||c||}
\hline
IC&$\textbf{D}_{\varphi\psi}$&Dim&IC&$\textbf{D}_{\varphi\psi}$&Dim\\
 \hline
$\mathcal{A}lg_2^1$&
$\left(\begin{array}{cccc}
0&0\\
0&d_{12}
\end{array}
\right)$
&
2
&
$\mathcal{A}lg_2^2$
&
$\left(\begin{array}{cccc}
0&0\\
0&d_{12}
\end{array}
\right)$
&
1
\\ \hline 
$\mathcal{A}lg_2^3$&
$\left(\begin{array}{cccc}
0&0\\
0&d_{12}
\end{array}
\right)$
&
2
&
$\mathcal{A}lg_2^4$
&
$\left(\begin{array}{cccc}
0&0\\
0&d_{12}
\end{array}
\right)$
&
1
\\ \hline
\end{tabular}
\end{center}
\item The classification table for $3$-dimensional derivation is given as follows:
\begin{center}
    \begin{tabular}{||c||c||c||c||c||c||c||c||c||c||c||c||}
\hline
IC&$\textbf{D}_{\varphi\psi}$&Dim&IC&$\textbf{D}_{\varphi\psi}$&Dim\\
			\hline
$\mathcal{A}lg_3^1$&
$\left(\begin{array}{cccc}
0&d_{12}&0\\
0&0&0\\
0&0&d_{33}
\end{array}
\right)$
&
2
&
$\mathcal{A}lg_3^2$
&
$\left(\begin{array}{cccc}
0&d_{12}&0\\
0&0&0\\
0&0&d_{33}
\end{array}
\right)$
&
2
\\ \hline 
$\mathcal{A}lg_3^3$&
$\left(\begin{array}{cccc}
0&d_{12}&0\\
0&0&0\\
0&0&d_{33}
\end{array}
\right)$
&
2
&
$\mathcal{A}lg_3^4$
&
$\left(\begin{array}{cccc}
0&d_{12}&0\\
0&0&0\\
0&0&d_{33}
\end{array}
\right)$
&
2
\\ \hline
$\mathcal{A}lg_3^5$&
$\left(\begin{array}{cccc}
0&d_{12}&0\\
0&0&0\\
0&0&d_{33}
\end{array}
\right)$
&
2
&

&

&
\\ \hline
\end{tabular}
\end{center}
\end{itemize}

\subsection{The Classification of Quasi-derivation of BiHom-associative dialgebras} In this subsection we define quasi-derivation of BiHom- associative dialgebra and provide the classification for $2$ and $3$ dimensional 
\begin{definition}\label{rd3.9}
Let $({A}, \dashv, \vdash, \varphi, \psi)$ be a BiHom-associative dialgebra. A linear map ${D} : {A}\rightarrow {A}$ is called a 
$(\varphi^k, \psi^l)$-quasi derivation of ${D}$, if there exist a derivation map $D':A\to A$ such that the following equations hold
\begin{enumerate}
	\item [$1.$] ${D}\circ\varphi= \varphi\circ{D},\,{D}\circ\psi=\psi\circ{D}$, ${D'}\circ\varphi= \varphi\circ{D'},\,{D'}\circ\psi=\psi\circ{D'}$,
	\item [$2.$] ${D'}(x\dashv y)=\varphi^k\psi^l(x)\dashv{D}(y)+{D}(x)\dashv\varphi^k\psi^l(y),$
	\item [$3.$] ${D'}(x\vdash y)=\varphi^k\psi^l(x)\vdash {D}(y)+{D}(x)\vdash\varphi^k\psi^l(y),$
\end{enumerate}
for $x, y\in   {A}.$\\
We denote by  $QDer_{(\varphi^k, \psi^l)}({A})$, the set of all $(\varphi^k,\psi^l)$-derivations of ${A}$. The space of all Quasi-derivations of $A$ is given by $QDer({A}):=\displaystyle\bigoplus_{k\geq 0}\displaystyle\bigoplus_{l\geq 0}QDer_{(\varphi^k, \psi^l)}({A})$.
\end{definition}

\begin{itemize}
    \item $2-$dimensional Quasi derivation on BiHom-associative dialgebra: \begin{center}
    \begin{tabular}{||c||c||c||c||c||c||c||c||c||c||c||c||}
\hline
IC&$\textbf{D}_{\varphi\psi}$&Dim&$\textbf{D}'_{\varphi\psi}$&Dim\\
			\hline
$\mathcal{A}lg_2^1$&
$\left(\begin{array}{cccc}
d_{11}&d_{12}\\
0&d_{11}
\end{array}
\right)$
&
2
&
$\left(\begin{array}{cccc}
0&0\\
0&d'_{22}
\end{array}
\right)$
&
1
\\ \hline 
$\mathcal{A}lg_2^2$&
$\left(\begin{array}{cccc}
d_{11}&d_{12}\\
0&d_{11}
\end{array}
\right)$
&
2
&
$\left(\begin{array}{cccc}
0&0\\
0&d'_{22}
\end{array}
\right)$
&
1
\\ \hline
$\mathcal{A}lg_2^3$&
$\left(\begin{array}{cccc}
d_{11}&d_{12}\\
0&d_{11}
\end{array}
\right)$
&
2
&
$\left(\begin{array}{cccc}
0&0\\
0&d'_{22}
\end{array}
\right)$
&
1
\\ \hline
$\mathcal{A}lg_2^4$&
$\left(\begin{array}{cccc}
d_{11}&d_{12}\\
0&d_{11}
\end{array}
\right)$
&
2
&
$\left(\begin{array}{cccc}
0&0\\
0&d'_{22}
\end{array}
\right)$
&
1
\\ \hline
\end{tabular}
\end{center}
    \item  $3-$dimensional Quasi derivation on BiHom-associative dialgebra:\begin{center}
    \begin{tabular}{||c||c||c||c||c||c||c||c||c||c||c||c||}
\hline
IC&$\textbf{D}_{\varphi\psi}$&Dim&$\textbf{D}'_{\varphi\psi}$&Dim\\
			\hline
$\mathcal{A}lg_3^1$&
$\left(\begin{array}{cccc}
d_{11}&d_{12}&0\\
0&d_{11}&0\\
0&0&d_{33}
\end{array}
\right)$
&
3
&
$\left(\begin{array}{cccc}
0&d'_{12}&0\\
0&0&0\\
0&0&d'_{33}
\end{array}
\right)$
&
2
\\ \hline 
$\mathcal{A}lg_3^2$&
$\left(\begin{array}{cccc}
d_{11}&d_{12}&0\\
0&d_{11}&0\\
0&0&d_{33}
\end{array}
\right)$
&
3
&
$\left(\begin{array}{cccc}
0&d'_{12}&0\\
0&0&0\\
0&0&d'_{33}
\end{array}
\right)$
&
2
\\ \hline
$\mathcal{A}lg_3^3$&
$\left(\begin{array}{cccc}
d_{11}&d_{12}&0\\
0&d_{11}&0\\
0&0&d_{33}
\end{array}
\right)$
&
3
&
$\left(\begin{array}{cccc}
0&d'_{12}&0\\
0&0&0\\
0&0&d'_{33}
\end{array}
\right)$
&
2
\\ \hline
$\mathcal{A}lg_3^4$&
$\left(\begin{array}{cccc}
d_{11}&d_{12}&0\\
0&d_{11}&0\\
0&0&d_{33}
\end{array}
\right)$
&
3
&
$\left(\begin{array}{cccc}
0&d'_{12}&0\\
0&0&0\\
0&0&d'_{33}
\end{array}
\right)$
&
2
\\ \hline
$\mathcal{A}lg_3^5$&
$\left(\begin{array}{cccc}
d_{11}&d_{12}&0\\
0&d_{11}&0\\
0&0&d_{33}
\end{array}
\right)$
&
3
&
$\left(\begin{array}{cccc}
0&d'_{12}&0\\
0&0&0\\
0&0&d'_{33}
\end{array}
\right)$
&
2
\\ \hline
\end{tabular}
\end{center}
\end{itemize}
\subsection{The Classification of Generalized derivation of BiHom-associative dialgebras} In this subsection we define the Generalized-derivation  and provide the classification for $2$ and $3$ dimensional BiHom-associative dialgebra.
\begin{definition}\label{rd3.10}
Let $({A}, \dashv, \vdash, \varphi, \psi)$ be a BiHom-associative dialgebra. A linear map ${D} : {A}\rightarrow {A}$ is called a 
$(\varphi^k, \psi^l)$-derivation of ${A}$, if there exist a derivation map $D:A\to A$ such that the following equations hold
\begin{enumerate}
	\item [$1.$] ${D}\circ\varphi= \varphi\circ{D},\,{D}\circ\psi=\psi\circ{D}$, ${D'}\circ\varphi= \varphi\circ{D'},\,{D'}\circ\psi=\psi\circ{D'}$,\,
	${D''}\circ\varphi= \varphi\circ{D''},\,{D''}\circ\psi=\psi\circ{D''}$
	\item [$2.$] ${D''}(x\dashv y)=\varphi^k\psi^l(x)\dashv{D'}(y)+{D}(x)\dashv\varphi^k\psi^l(y),$
	\item [$3.$] ${D''}(x\vdash y)=\varphi^k\psi^l(x)\vdash {D'}(y)+{D}(x)\vdash\varphi^k\psi^l(y),$
\end{enumerate}
for $x, y\in   {A}.$\end{definition}
We denote by  $GDer_{(\varphi^k, \psi^l)}({A})$, the set of all $(\varphi^k,\psi^l)$-generalized derivations of ${A}$. The space of all generalized derivations of $A$ is given by
$GDer({A}):=\displaystyle\bigoplus_{k\geq 0}\displaystyle\bigoplus_{l\geq 0}GDer_{(\varphi^k, \psi^l)}({A})$.
\begin{itemize}
\item $2-$dimensional Generalized-derivation on BiHom-associative dialgebra:

\begin{center}
    \begin{tabular}{||c||c||c||c||c||c||c||c||c||c||c||c||}
\hline
IC&$\textbf{D}_{\varphi\psi}$&Dim&$\textbf{D}'_{\varphi\psi}$&Dim&$\textbf{D}^{''}_{\varphi\psi}$&Dim\\
			\hline
$\mathcal{A}lg_2^1$&
$\left(\begin{array}{cccc}
d_{11}&d_{12}\\
0&d_{11}
\end{array}
\right)$
&
2
&
$\left(\begin{array}{cccc}
0&d'_{12}\\
0&d'_{22}
\end{array}
\right)$
&
2
&
$\left(\begin{array}{cccc}
d''_{11}&d''_{12}\\
0&d''_{22}
\end{array}
\right)$
&
3
\\ \hline
$\mathcal{A}lg_2^2$&
$\left(\begin{array}{cccc}
d_{11}&d_{12}\\
0&d_{11}
\end{array}
\right)$
&
2
&
$\left(\begin{array}{cccc}
0&d'_{12}\\
0&d'_{22}
\end{array}
\right)$
&
2
&
$\left(\begin{array}{cccc}
d''_{11}&d''_{12}\\
0&d''_{22}
\end{array}
\right)$
&
3
\\ \hline
$\mathcal{A}lg_2^3$&
$\left(\begin{array}{cccc}
d_{11}&d_{12}\\
0&d_{11}
\end{array}
\right)$
&
2
&
$\left(\begin{array}{cccc}
0&d'_{12}\\
0&d'_{22}
\end{array}
\right)$
&
2
&
$\left(\begin{array}{cccc}
d''_{11}&d''_{12}\\
0&d''_{22}
\end{array}
\right)$
&
3
\\ \hline
$\mathcal{A}lg_2^4$&
$\left(\begin{array}{cccc}
d_{11}&d_{12}\\
0&d_{11}
\end{array}
\right)$
&
2
&
$\left(\begin{array}{cccc}
0&d'_{12}\\
0&d'_{22}
\end{array}
\right)$
&
2
&
$\left(\begin{array}{cccc}
d''_{11}&d''_{12}\\
0&d''_{22}
\end{array}
\right)$
&
3
\\ \hline
\end{tabular}

\end{center}
\item $3-$dimensional Generalized-derivation on BiHom-associative dialgebra:
\begin{center}
    \begin{tabular}{||c||c||c||c||c||c||c||c||c||c||c||c||}
\hline
IC&$\textbf{D}_{\varphi\psi}$&Dim&$\textbf{D}'_{\varphi\psi}$&Dim&$\textbf{D}^{''}_{\varphi\psi}$&Dim\\
			\hline
$\mathcal{A}lg_3^1$&
$\left(\begin{array}{cccc}
d_{11}&d_{12}&0\\
0&d_{11}&0\\
0&0&d_{33}
\end{array}
\right)$
&
3
&
$\left(\begin{array}{cccc}
0&d'_{12}&d'_{13}\\
0&d'_{22}&d'_{23}\\
0&d'_{32}&d'_{33}
\end{array}
\right)$
&
6
&
$\left(\begin{array}{cccc}
d''_{11}&d''_{12}&0\\
0&d''_{22}&0\\
0&0&d''_{33}
\end{array}
\right)$
&
4
\\ \hline 
$\mathcal{A}lg_3^2$&
$\left(\begin{array}{cccc}
d_{11}&d_{12}&0\\
0&d_{11}&0\\
0&0&d_{33}
\end{array}
\right)$
&
3
&
$\left(\begin{array}{cccc}
0&d'_{12}&d'_{13}\\
0&d'_{22}&d'_{23}\\
0&d'_{32}&d'_{33}
\end{array}
\right)$
&
6
&
$\left(\begin{array}{cccc}
d''_{11}&d''_{12}&0\\
0&d''_{22}&0\\
0&0&d''_{33}
\end{array}
\right)$
&
4
\\ \hline 
$\mathcal{A}lg_3^3$&
$\left(\begin{array}{cccc}
d_{11}&d_{12}&0\\
0&d_{11}&0\\
0&0&d_{33}
\end{array}
\right)$
&
3
&
$\left(\begin{array}{cccc}
0&d'_{12}&d'_{13}\\
0&d'_{22}&d'_{23}\\
0&d'_{32}&d'_{33}
\end{array}
\right)$
&
6
&
$\left(\begin{array}{cccc}
d''_{11}&d''_{12}&0\\
0&d''_{22}&0\\
0&0&d''_{33}
\end{array}
\right)$
&
4
\\ \hline  
$\mathcal{A}lg_3^4$&
$\left(\begin{array}{cccc}
d_{11}&d_{12}&0\\
0&d_{11}&0\\
0&0&d_{33}
\end{array}
\right)$
&
3
&
$\left(\begin{array}{cccc}
0&d'_{12}&d'_{13}\\
0&d'_{22}&d'_{23}\\
0&d'_{32}&d'_{33}
\end{array}
\right)$
&
6
&
$\left(\begin{array}{cccc}
d''_{11}&d''_{12}&0\\
0&d''_{22}&0\\
0&0&d''_{33}
\end{array}
\right)$
&
4
\\ \hline 
$\mathcal{A}lg_3^5$&
$\left(\begin{array}{cccc}
d_{11}&d_{12}&0\\
0&d_{11}&0\\
0&0&d_{33}
\end{array}
\right)$
&
3
&
$\left(\begin{array}{cccc}
0&d'_{12}&d'_{13}\\
0&d'_{22}&d'_{23}\\
0&d'_{32}&d'_{33}
\end{array}
\right)$
&
6
&
$\left(\begin{array}{cccc}
d''_{11}&d''_{12}&0\\
0&d''_{22}&0\\
0&0&d''_{33}
\end{array}
\right)$
&
4
\\ \hline 
\end{tabular}\\
\end{center}
\end{itemize}
\section{Cohomology of BiHom-associative dialgebra} In this section, we study the Cohomology of BiHom-associative dialgebra. To accomplish this task, we first recall the cohomology of BiHom-associative algebra.
\subsection{Cohomology of BiHom-associative dialgebras.}\,
\begin{definition}
Let $(A, \mu, \varphi, \psi)$ be a BiHom-associative algebra. For each $n\geq 1$,
we define a vector space $\mathbb{C}^n_{Hoch}(A,A)$ consisting of all multilinear maps  
$f : A^{\otimes n}\longrightarrow A$ satisfying  
\begin{equation}\label{com1}
\varphi\circ f=f\circ\varphi^{\otimes n}\quad\text{and}\quad \psi\circ f=f\circ\psi^{\otimes n}.
\end{equation}
Define a coboundary map $\delta_{Hoch}(A,A): \mathbb{C}^{n}_{Hoch}\longrightarrow \mathbb{C}^{n+1}_{Hoch}$, such that for any $f\in \mathbb{C}^{n}_{Hoch}$ and $a_1, a_2,\cdots, a_{n+1}\in A,$ we have
\begin{equation*}
\begin{array}{ll}
(\delta_{Hoch}f)(a_1,\cdots,a_{n+1})
=&\mu(\varphi^{n-1}(a_1),f(a_2,\cdots,a_{n+1}))\\
&+\sum^n_{i=1}(-1)^if(\varphi(a_1),\cdots,\varphi(a_{i-1}),\mu(a_i,a_{i+1}),\psi(a_{i+2}),\cdots,\psi(a_{n+1}))\\
&+(-1)^{n+1}\mu(f(a_1,\cdots,a_n),\psi^{n-1}(a_{n+1})),
\end{array}
\end{equation*}
\end{definition}
These groups  correspond in Deformation theory to the space of all obstructions to extend a deformation of order $n$ to a deformation of order $n+1$.
A $3$-coboundary operator of a BiHom-associative algebra $A$ is thus given by a  map 
\begin{align*}
    \delta^3_{Hoch} : \mathbb{C}^3(A,A)&\rightarrow \mathbb{C}^4(A, A), \\ \Psi&\mapsto\delta^3_{Hoch}\Psi 
\end{align*}
defined by 
$$\begin{array}{ll} 
\delta^3_{Hoch}\Psi(x,y,z,w)
=&\mu(\varphi^2(x),\psi(y,z,w))-\Psi(\mu(x,y),\psi(z),\psi(w))+\psi(\varphi(x),\mu(y,z), \psi(w))\\
&-\Psi(\varphi(x),\varphi(y),\mu(z,w))
+\mu(\Psi(x,y,z)),\psi^2(w)).
\end{array}$$ 

\begin{proposition}
$\delta^3_{Hoch}(\delta^2_{Hoch})=0.$
\end{proposition}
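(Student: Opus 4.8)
The statement is the BiHom-analogue of the classical fact that the Hochschild coboundary squares to zero, so the natural route is a direct expansion and term-by-term cancellation, using only the structural identities available: the multiplicativity relations $\varphi\circ\mu=\mu\circ(\varphi\otimes\varphi)$ and $\psi\circ\mu=\mu\circ(\psi\otimes\psi)$, the commutativity $\varphi\psi=\psi\varphi$, the compatibility \eqref{com1} of cochains with the structure maps, and the BiHom-associativity of $\mu$ in the form $\mu(\varphi(x),\mu(y,z))=\mu(\mu(x,y),\psi(z))$. Fixing a $2$-cochain $f\in\mathbb{C}^2_{Hoch}(A,A)$, I would first specialize the general definition of $\delta_{Hoch}$ to $n=2$, obtaining
\[
(\delta^2_{Hoch}f)(a_1,a_2,a_3)=\mu(\varphi(a_1),f(a_2,a_3))-f(\mu(a_1,a_2),\psi(a_3))+f(\varphi(a_1),\mu(a_2,a_3))-\mu(f(a_1,a_2),\psi(a_3)),
\]
and to $n=3$,
\[
(\delta^3_{Hoch}\Psi)(a_1,a_2,a_3,a_4)=\mu(\varphi^2(a_1),\Psi(a_2,a_3,a_4))-\Psi(\mu(a_1,a_2),\psi(a_3),\psi(a_4))+\Psi(\varphi(a_1),\mu(a_2,a_3),\psi(a_4))-\Psi(\varphi(a_1),\varphi(a_2),\mu(a_3,a_4))+\mu(\Psi(a_1,a_2,a_3),\psi^2(a_4)).
\]
(The displayed three-coboundary in the excerpt carries some typographical slips, so I would read it off the general formula.)

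\textbf{Key steps.} Writing $\Psi=\delta^2_{Hoch}f$ and substituting into $\delta^3_{Hoch}\Psi$ produces twenty terms: one from each of the five outer summands of $\delta^3_{Hoch}$, each carrying the four summands of $\delta^2_{Hoch}f$. The plan is to normalize every term into a canonical shape by pushing $\varphi$ and $\psi$ inside products and across $f$ using multiplicativity and \eqref{com1} (for instance $f(\psi(a_3),\psi(a_4))=\psi(f(a_3,a_4))$, $\varphi(\mu(a_1,a_2))=\mu(\varphi(a_1),\varphi(a_2))$, and $\psi(\psi(a_4))=\psi^2(a_4)$). After this normalization, ten of the twenty terms cancel in five pairs purely formally, without invoking associativity: these are the pairs coming from adjacent ``inner/outer'' insertions of $\mu$ that produce identical expressions with opposite signs. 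The remaining five pairs cancel precisely because of BiHom-associativity of $\mu$; the decisive identifications are $\mu(\varphi^2(a_1),\mu(\varphi(a_2),f(a_3,a_4)))=\mu(\mu(\varphi(a_1),\varphi(a_2)),\psi(f(a_3,a_4)))$, $\mu(\varphi^2(a_1),\mu(f(a_2,a_3),\psi(a_4)))=\mu(\mu(\varphi(a_1),f(a_2,a_3)),\psi^2(a_4))$, together with the three insertions of $f$ where associativity equates the two distinct bracketings $\mu(\mu(a_i,a_{i+1}),\psi(a_{i+2}))$ and $\mu(\varphi(a_i),\mu(a_{i+1},a_{i+2}))$ appearing inside a single slot of $f$.

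\textbf{Main obstacle and loose ends.} The only genuine difficulty is bookkeeping: keeping the twenty signed terms organized so that each cancellation is visible, and making sure that each application of the structure-map relations is legitimate (this is where multiplicativity of $\varphi,\psi$ is indispensable, since without it the rewritten terms would not match). I would present the computation grouped by the ``shape'' of each term — those of the form $\mu(\varphi^2(a_1),-)$, those of the form $\mu(-,\psi^2(a_4))$, and those with a single outer $f$ — so that the five free cancellations and the five associativity-driven ones appear in tidy blocks. Finally, to confirm that the result is an honest element of $\mathbb{C}^4_{Hoch}(A,A)$ rather than merely zero on the nose, one should note that $\delta_{Hoch}$ preserves \eqref{com1}, which follows from the same commuting relations; once that is in place, the vanishing of all twenty terms yields $\delta^3_{Hoch}\circ\delta^2_{Hoch}=0$.
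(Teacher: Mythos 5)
Your proposal is correct and follows essentially the same route as the paper's proof: expand $\delta^3_{Hoch}(\delta^2_{Hoch}f)$ into twenty terms, rewrite them using multiplicativity of $\varphi,\psi$ and the compatibility of $f$ with the structure maps, and cancel everything in pairs via BiHom-associativity of $\mu$. If anything, your bookkeeping (five pairs cancelling formally, five requiring the BiHom-associativity identity) is more explicit than the paper's, which lists the twenty expanded terms and asserts the cancellation in one stroke.
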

\begin{proof}
Recall that
$$ \delta^2_{Hoch}f(x,y,z)
=\mu(\varphi(x),f(y,z))-f(\mu(x,y),\psi(z))
+f(\varphi(x),\mu(y,z))-\mu(f(x,y),\psi(z)).$$    Subsequently, the existence of 
\begin{center}
   $ \begin{array}{ll} 
&\delta^3_{Hoch}(\delta^2_{Hoch}f)\psi(x,y,z,w)=\\
=&\mu(\varphi^2(x),\delta^2_{Hoch}f(y,z,w))-\delta^2_{Hoch}f(\mu(x,y),\psi(z),\psi(w))+\delta^2_{Hoch}f(\varphi(x),\mu(y,z),\psi(w))\\
&-\delta^2_{Hoch}f(\varphi(x),\varphi(y),\mu(z,w))+\mu(\delta^2_{Hom}f(x,y,z)),\psi^2(w))\\
=&\mu(\varphi^2(x),\mu(\varphi(y),f(z,w)))-\mu(\varphi^2(x),f(\mu(y,z),\psi(w)))+\mu(\varphi^2(x),f(\varphi(y),\mu(z,w)))\\
&-\mu(\varphi^2(x),\mu(f(y,z),\psi(w)))+\mu(\varphi\circ\mu(x, y),f(\psi(z),\psi(w)))-f(\mu(\mu(x, y), \psi(z),\psi^2(w))\\
&+f(\varphi\circ\mu(x, y),\mu(\psi(z), \psi(w)))-\mu(f(\mu(y,z),\psi(z)),\psi^2(w))+\mu(\varphi^2(x), f(\mu(y,z),\psi(w)))\\
&-f(\mu(\varphi(x),\mu(y,z)),\psi^2(w))+f(\varphi^2(x),\mu(\mu(y,z),\psi(w)))-\mu(f(\varphi(x),\mu(y,z)),\psi^2(w)))\\
&+\mu(\varphi^2(x),f(\varphi(y),\mu(z,w)))-f(\mu(\varphi(x),\varphi(y)),\psi\circ\mu(z,w))+f(\varphi^2(x),\mu(\varphi(y),\mu(z,w)))\\
&-\mu(f(\varphi(x),\varphi(y)),\psi\circ\mu(z,w))+\mu(\mu(\varphi(x),f(y,z)),\psi^2(w))-\mu(f(\mu(x,y),\psi(z)),\psi^2(w))\\
&+\mu(f(\varphi(x),\mu(y,z)),\psi^2(w))-\mu(\mu(f(x,y),\psi(z)),\psi^2(w))\\
=&0,
\end{array}$
\end{center} can be established due to the commutativity of $\varphi$, $\psi$, and $f$, as well as the BiHom-associativity of the multiplication operator $\mu$.
\end{proof}
\begin{example}We consider the $3$-dimensional BiHom-associative algebra  with a basis $\left\{e_1, e_2, e_3\right\}$, defined by\begin{center}
     $e_1\cdot e_2=e_1,$~ $ e_1\cdot e_2=e_2,$~ $ e_2\cdot e_1=e_2,$\\~ $ e_2\cdot e_2=e_2,$~ $e_3\cdot e_2=e_3,$ ~$e_3\cdot e_3=e_3,$\\ $\varphi(e_2)=e_2,$ $\psi(e_1)=e_1,$~ $\psi(e_2)=e_1-e_2.$
\end{center}
\begin{enumerate}
    \item \noindent We have the following  $2$-cocycles : \begin{eqnarray*}
    f(e_1, e_3)= a_{13}^3e_3,\quad f(e_2, e_3)= a_{23}^3e_3,\quad f(e_3, e_3)=a_{33}^3e_3.
    \end{eqnarray*}
\item \noindent We have the following $3$-cocycles:
\begin{eqnarray*}
    \begin{aligned}
\psi(e_1,e_1,e_1)=&a_{111}^3e_3,\quad
\psi(e_1,e_2,e_3)=&a_{123}^3e_3,\\
\psi(e_1,e_3,e_1)=&a_{131}^3e_3,\quad
\psi(e_1,e_3,e_2)=&a_{132}^3e_3,\\ 
\psi(e_1,e_3,e_3)=&a_{133}^3e_3,\quad
\psi(e_2,e_1,e_3)=&a_{213}^3e_3,\\
\psi(e_2,e_3,e_3)=&a_{233}^3e_3,\quad
\psi(e_3,e_1,e_3)=&a_{313}^3e_3,\\
\psi(e_3,e_2,e_3)=&a_{323}^3e_3,\quad
\psi(e_3,e_3,e_1)=&a_{331}^3e_3,\\
\psi(e_3,e_3,e_1)=&a_{332}^3e_3,\quad
\psi(e_3,e_3,e_1)=&a_{332}^3e_3.
    \end{aligned}
\end{eqnarray*}
\end{enumerate}
\end{example}
\par Now we  develop the cochain complex and cohomology of BiHom-associative dialgebras by using techniques of  \cite{AF, AD, AD1}.

Let $A=(A, \dashv,\vdash , \varphi, \psi)$ be a BiHom-associative dialgebra. We define the cohomology of $A$  by using planar binary trees, thus our cohomology will generalize the cohomologies of associative dialgebras \cite{AF}, Hom-associative dialgebras \cite{AD} and BiHom-associative dialgebras \cite{AD1}.

Let $Y_n$ be the set of all planar binary trees with $n$-vertices, $(n+1)$ leaves, and one root in such a way that each vertex is trivalent.  Take $Y_0$ to be the singleton set consisting of only one root. Thus, for small values of $n$, we have 
   \[
   Y_0 = \left\lbrace \tone \, \right\rbrace,~
   Y_1 = \left\lbrace \ttwo \right\rbrace,~
   Y_2 = \left\lbrace \tthreeone,\, \tthreetwo \right\rbrace,~
   Y_3 = \left\lbrace \tfourone,\, \tfourtwo,\, \tfourthree,\, \tfourfour,\, \tfourfive \right\rbrace.
   \]

For any three $y\in Y_n$, we label the $(n+1)$ leaves of $y$ by $\left\{0, 1,2,\cdots,n\right\}$ from left to right. (There is a grafting operation on the space of all planar binary trees). More, precisely, for  any $y_1\in Y_n$ and $y_2\in Y_n$, the grafting of $y_1$ and $y_2$ is the tree $y_1\vee y_2\in Y_{n+m+1}$ obtained by joining the roots of $y_1$ and $y_2$ and creating a root from that vertex.  For any $0\leq i\leq n+1$, there is a map $\bullet_i : Y_{n+1}\longrightarrow \left\{\dashv, \vdash\right\}$ define by
\begin{center}
$\bullet_0(y)=\bullet_0^y=
\left\{\begin{array}{c}
\dashv \, \text{if}\,  y\,  \text {has the shape}\,  |\vee y_1\,  \text{for some}\, y_1\in Y_n,\\
\vdash\, \text{otherwise}, 
\end{array}\right.
$ \\     
$\bullet_i(y)=\bullet_i^y=
\left\{\begin{array}{c}
\dashv \, \text{if}\,  \text {the\, i-th\, leaf\, of}\, y\,  \text{is\,  oriented\, like}\, '\backslash', \\
\vdash\,  \text{if}\,  \text {the\, i-th\,  leaf\,  of}\, y\,  \text{is\,  oriented\,  like}\, '/',
\end{array}\right.
$  \\ 
$\bullet_{n+1}(y)=\bullet_{n+1}^y=
\left\{\begin{array}{c}
\dashv \, \text{if}\,  y\,  \text {has the shape}\,  y_1\vee |\,  \text{for\,  some} ~y_1\in Y_n\\
\vdash\, \text{otherwise}, 
\end{array}\right.
$  
\end{center} 

Finally, for any $n\geq 1$, there are maps $d_i : Y_n \longrightarrow  Y_{n-1}, (0\leq i\leq n)$ $y\longmapsto d_i(y)$, where $d_i(y)$ is obtained from $y$ by removing the $i-th$ leaf of $y.$ This map is also named a face map. The face maps satisfy the relation $d_id_j = d_{j-1}d_i$, for all $i<j$. Now we are in a position to define the cohomology of a BiHom-associative dialgebra $A= (A, \dashv, \vdash, \varphi, \psi)$.

\begin{definition}
 For each $n\geq 0$, we define an abelian group 
 $ CY_{\varphi, \psi}^n(A, A):=Hom(\mathbb{K}\left[Y_n\right]\otimes A^{\otimes n}$, A) and define a map 
$\delta_{BiH} : CY_{\varphi, \psi}^n(A, A)\longrightarrow  CY_{\varphi, \psi}^{n+1}(A, A)$ by 

\begin{equation*}
\begin{aligned}
    (\varphi\circ f)(y ; a_1,\cdots,a_{n})=&f(y; \varphi(a_1),\cdots,\varphi(a_n))\\\text{and}\\(\psi\circ f)(y ; a_1,\cdots,a_{n})=&f(y; \psi(a_1),\cdots,\psi(a_n))
\end{aligned}
\end{equation*}

\begin{equation}
\begin{array}{ll}\label{com3}
(\delta_{BiH}^nf)(y ; a_1,\cdots,a_{n+1})
&=\varphi^{n-1}(a_1)\bullet_0^yf(d_o(y) ; a_2\cdots,a_{n+1})\\
&+\sum^n_{i=1}(-1)^if(d_i(y);\alpha(a_1),\cdots, a_i\bullet_i^ya_{i+1}, \psi(a_{i+2}), \cdots, \psi(a_{n+1}))\\
&+(-1)^{n+1}f(d_{n+1}(y) ; a_1,\cdots,a_n)\bullet_{n+1}^y\psi^{n-1}(a_{n+1}),
\end{array}\end{equation}
for $y\in Y_{n+1}$ and $a_1, \cdots, a_{n+1}\in A.$ 
\end{definition}Thus an abelian group $ CY_{\varphi, \psi}^n(A, A)$ along with the coboundary operator $\delta_{BiH}$ is called a cochain complex $\left\{CY^n(A, A), \delta_{BiH}\right\}$. The cohomology corresponding to this cochain complex is called the Hochschild cohomology of the BiHom-associative dialgebra $A.$ We denote it by $HY^n(A, A)$.

Let $A$ be a vector space and $\varphi,\psi: A\rightarrow A$ be linear  maps. For each $k\geq 1$ define $CY_{\varphi\psi}^k(A,A)$ be the space of all multilinear maps $f : K[Y_k]\otimes A^{\otimes k}\longrightarrow A$ satisfying 
\begin{center}
    $(\varphi\circ f)(y ; a_1,\cdots,a_{n})=f(y; \varphi(a_1),\cdots,\varphi(a_n)),$\\$(\psi\circ f)(y ; a_1,\cdots,a_{n})=f(y;\psi(a_1),\cdots,\psi(a_n))$
\end{center}
for all $y\in Y_k$ and $a_i\in A$. 
 
Our aim is to define on operad structure on $\Theta=\left\{\Theta(k)|k\geq 1\right\}$ where $\Theta(k)=CY_{\varphi\psi}^k(A,A)$, for $k\geq 1.$
For this, we closely follow \cite{AG}. 
For any $k, n_1,\cdots, n_k\geq 1$, we define maps
$$R_0(k, n_1,\cdots, n_k)=d_1,\cdots, d_{n_1+\cdots+n_{k-1}-1}d_{n_1+\cdots+n_{k-1}+1}\cdots d_{n_1 +1}\cdots d_{n_1+\cdots n_{k} -1}.$$
Moreover, for any $1\leq i\leq k$, there are maps $R_i(k; n_1,\cdots, n_k) : Y_{n_1+\cdots+n_k}\rightarrow Y_{n_i}$ define by 
$$R_i(k; n_1,\cdots, n_k)=d_0d_1\cdots d_{n_1+n_{i-1}-1}d_{n_1+\cdots+n_{i+1}}\cdots d_{n_1+\cdots n_k}.$$
In the other words, the function $R_0(k; n_1,\cdots, n_k)$ misses $d_0, d_{n_1},d_{n_1+n_2},\cdots,d_{n_1+\cdots +n_k}$ and the function 
$R_i(k; n_1,\cdots, n_k)$ misses $d_{n_1},d_{n_{i-1}},d_{n_1},d_{n_{i-1}+1},\cdots,d_{n_1+\cdots +n_i}.$
$$R=\left\{R_0(k; n_1,\cdots, n_k),R_i(k; n_1,\cdots, n_k)|k,n_1,\cdots, n_k\geq1\, \text{and}\,1\leq i\leq k\right\}$$ satisfies the following relations of a pre -operadic 
system \cite{Y1}.
\begin{enumerate}
	\item[$\bullet$] $R_0(k, \underbrace{1,\cdots,1}_{k\, times})=id_{Y_k}$ for each $k\geq 1$
	\item[$\bullet$] $R_0(k, n_1,\cdots,n_k)R_0(n_{1}+\cdots+n_k; m_1,\cdots, m_{n_1+\cdots+n_k})$
	\item[$=$]$R_0(k, m_1,\cdots+m_{n_1}, m_{n_1+1}+\cdots+m_{n_1+n_2},\cdots, m_{n_1+\cdots+n_{k-1}+1}+\cdots+m_{{n_1}+\cdots+n_k})$,
	\item[$\bullet$]$R_i(k, n_1,\cdots,n_k)R_0(n_{1}+\cdots+n_k; m_1,\cdots, m_{n_1+\cdots+n_k})=R_0(n_i ; m_{n_1+\cdots+n_{i-1}+1},\cdots, m_{n_1+\cdots+n_i}),$\\
$R_i(k; m_1,\cdots, m_{n_1},m_{n_1+1}+\cdots+m_{n_1+n_2},\cdots,m_{n_1+\cdots+n_{k-1}+1}+\cdots+ m_{n_1+\cdots+n_k}),$
\item[$\bullet$]$R_{n_1,\cdots,n_{i-1}+j})(n_{1}+\cdots+n_k; m_1,\cdots, m_{n_1+\cdots+n_k})=R_j(n_i ; m_{n_1+\cdots+n_{i-1}+1},\cdots, m_{n_1+\cdots+n_i})$\\
$R_i(k; m_1,\cdots, m_{n_1},m_{n_1+1}+\cdots+m_{n_1+n_2},\cdots,m_{n_1+\cdots+n_{k-1}+1}+\cdots+ m_{n_1+\cdots+n_k}),$
\end{enumerate}
for any $ m_1,\cdots, m_{n_1+\cdots+n_k}\geq 1.$
\par At this point, we have the capability to establish an operad structure on $\Theta.$ Let's proceed with defining the following partial composition
$$\circ_i : \Theta(m)\otimes\Theta(n)\longrightarrow \Theta(m+n-1)$$ by 

$$\begin{array}{ll} 
(f\circ_i g)(y ; a_1\cdots,a_{m+n-1})
&=f\Big(R_0(m; \overbrace{1,\cdots,1,\underbrace{n}_{i-\text{th place}}1\cdots,1}^{m-\text{tuple}})y; \varphi^{n-1}a_1,\cdots,\varphi^{n-1}a_{i-1},\\
&\quad g(R_i(m; \overbrace{1,\cdots,1,\underbrace{n}_{i-\text{th place}}1\cdots,1}^{m-\text{tuple}})y;a_i,\cdots,a_{i+n-1}),\psi^{n-1}a_{i+n},\cdots,\psi^{n-1}a_{m+n+1})\Big).
\end{array}$$

for $f\in CY^{m}_{\varphi\psi}(A,A), g\in  CY^{n}_{\varphi\psi}(A,A),y\in Y_{m+n-1}$ and $a_1,\cdots,a_{m+n-1}\in A.$ The two definitions of non-$\sum$
operad are related by 

\begin{equation}
\begin{array}{ll}
(f\circ_i g)=y(f; \overbrace{id,\cdots,id,\underbrace{g}_{i-\text{th place}}id\cdots,id}^{m-\text{tuple}}),\quad {for }\quad f\in \Theta(m)
\end{array}
\end{equation}

\begin{equation}
\begin{array}{ll}
\gamma(f ; g_1,\cdots, g_k)(\cdots((f\circ_kg_k)\circ_{k-1\circ}g_{k-1},\cdots)\circ_1g_1,\quad {for }\quad f\in \Theta(k).
\end{array}
\end{equation}

The pre-operaic identities, it follows that the compositions 

$$\begin{array}{ll}
&\gamma_{\varphi\psi}(f;g_1,\cdots,g_k)(y; a_1,\cdots,a_{n_1+\cdots+n_k})\\
&=f\Bigl(R_0(k,n_1,\cdots,n_k)y;\varphi^{\sum^{k}_{i=2}\left|g_i\right|}\cdot g_1\Bigl(R_1(k;n_1,\cdots,n_k)y;a_1,\cdots,a_{n_1}\Bigr),\cdots, \\
&\qquad \varphi^{\sum^{k}_{i=2}\left|g_i\right|}\cdot\psi^{\sum_{i=2, l\neq i}^k \left|g_l\right|} g_i\Bigl(R_i(k;n_1,\cdots,n_k)y;\bigl(a_{n_1+\cdots+n_{i-1}+1}, \cdots , a_{n_1+\cdots+n_i}\bigr)\Bigr),\\
&\qquad \psi^{\sum_{i=1}^k \left|g_i\right|} g_k\Bigl(R_k(k;n_1,\cdots,n_k)y;a_{n_1+\cdots+n_{k-1}+1},\cdots, a_{n_1+\cdots+n_k}\Bigr)\Bigr),
\end{array}$$
for all $y\in Y_{n_1+\cdots+n_k}$ and $a_1,a_2,\cdots,a_{n_1+\cdots+n_k}\in A.$

We also consider the identity map $id\in CY_{\varphi\psi}^1(A,A)$ defined by $id([1] ; a)=a$, for all $a\in A.$

Note that, the corresponding braces are given by 
$$
\left\{f\right\}\left\{g_1,\cdots,g_n\right\}=\sum(-1)^\varepsilon\gamma_{\varphi\psi}(f; id,\cdots,id,g_1,id,\cdots,id,g_n,id,\cdots,id).
$$

the degre $-1$ graded Lie bracket on $CY_{\varphi\psi}^\bullet(A,A)$ is given by $$\left[f,g\right]=f\circ g-(-1)^{(m-1)(n-1)}g\circ f,$$
where 
$$\begin{array}{ll}
&(f\circ g)(y; a_1,\cdots,a_{m+n-1})\\
&=\sum_{i=1}^m(-1)^{(i-1)(n-1)}f\Big(R_0(m; \overbrace{1,\cdots,1,\underbrace{n}_{i-\text{th place}}1\cdots,1}^{m-\text{tuple}})y; \varphi^{n-1}a_1,\cdots,\varphi^{n-1}a_{i-1},\\
&\hspace{3cm}g\Big(R_i(m; \overbrace{1,\cdots,1,\underbrace{n}_{i-\text{th place}}1\cdots,1}^{m-\text{tuple}})y; a_i,\cdots,a_{i+n-1}),\psi^{n-1}a_{i+n},\cdots,\psi^{n-1}a_{m+n-1}\Big),
\end{array}$$
for $f\in CY_{\varphi\psi}^m(A,A), g\in CY_{\varphi\psi}^m(A,A)$ and $a_1,\cdots,a_{m+n-1}\in A.$

Next, let $(A, \dashv, \vdash, \varphi, \psi)$ be a BiHom-dialgebra. Consider the operad structure on $CY^\bullet_{\varphi\psi}(A,A)$ as defined above. Define an element 
$\pi\in CY_{\varphi\psi}^2(A,A)$ by the following 

$$
\pi(y;a,b)=
\left\{\begin{array}{c}a\vdash b\quad \text{if}\quad y=\left[12\right],\\  
a\dashv b\quad \text{if}\quad y=\left[21\right],
\end{array}\right.
$$
for all $a,b\in A.$ An easy calculation shows that 

$$
\left\{\pi\right\}\left\{\pi\right\}(y;a,b,c)=
\left\{\begin{array}{c}  
(a\vdash b)\vdash\psi(c)-\varphi(a)\vdash(b\vdash c)\quad \text{if}\, y=\left[123\right],\\
(a\dashv b)\vdash\psi(c)-\varphi(a)\vdash(b\vdash c)\quad \text{if}\, y=\left[213\right],\\
(a\vdash b)\dashv\psi(c)-\varphi(a)\vdash(b\dashv c)\quad \text{if}\, y=\left[131\right],\\
(a\dashv b)\dashv\psi(c)-\varphi(a)\dashv(b\vdash c)\quad \text{if}\, y=\left[312\right],\\
(a\dashv b)\dashv\psi(c)-\varphi(a)\dashv(b\dashv c)\quad \text{if}\, y=\left[321\right].
\end{array}\right.
$$
Hence, it follows from the BiHom-dialgebra condition that $\left\{\pi\right\}\left\{\pi\right\}(y; a, b,c)=0,$ for all $y\in Y_3$ and 
$a, b, c\in A.$  Therefore, $\pi$ define a multiplication on the operad $CY_{\varphi\psi}^\bullet(A,A).$ The corresponding dot product on $CY_{\varphi\psi}^\bullet(A,A)$ is given by   
    $$\begin{array}{ll}
&(f\cdot g)(y; a_1,\cdots,a_{m+n})\\&=(-1)^m\left\{\pi\right\}\left\{f,g\right\}(y;a_1,\cdots,a_{m+n})\\
&=(-1)^{mn}\pi\left(R_0(2;m,n)y; \varphi^{n-1}f\left(R_1(2;m,n)y; a_1,\cdots,a_m\right),\psi^{m-1}g\left(R_2(2;m,n)y;a_{m+1},\cdots,a_{m+n}\right)\right),
\end{array}$$
for $f\in CY^m_{\varphi\psi}(A,A), g\in CY^m_{\varphi\psi}(A,A), y\in Y_{m+n}$ and $a_1,\cdots,a_{m+n}\in A.$

 \section{Formal deformations of BiHom-associative dialgebras}
In this section, we aim to discuss the geometric classification  of BiHom-associative dialgebras. To accomplish our objective, we use one parameter formal deformation theory that was initially proposed by Gerstenhaber for associative algebras and later extended to BiHom-associative algebras.
\begin{definition}
Let $(\Theta, \eta, id)$ be an operad. Consider the space $\Theta(n)[[t]]$ of formal power series in a variable t with coefficients in $\Theta(n)$. One can linearly extend the circle products to
$\displaystyle\bigoplus_{n\geq 0}\Theta(n)[[t]].$ 
\end{definition}Let $\pi$ be a fixed multiplication on $\Theta.$ 
\begin{definition}
A formal 1-parameter deformation of $\pi$ is given by a formal sum $\pi_t=\pi_0+\pi_1t+\pi_2t^2+\cdots\in \Theta(2)[[t]]$ with $\pi_0=\pi$, satisfying $\pi_t\circ\pi_t=0.$ This is equivalent to a system
of equations  :
\begin{equation}\label{eqd}
\sum_{i+j=n}\pi_i\circ\pi_j=0,\quad for\quad  n\geq 0.
\end{equation}
\end{definition}
Moreover, if $(A, \dashv, \vdash, \varphi, \psi)$ is a BiHom-associative dialgebra, then the element $\pi\in \Theta(2)$ given by $\pi( \tthreeone ; a, b)=a\dashv b$ and $\pi(\tthreetwo ; a, b)=a\vdash b$
defines a multiplication on $\Theta.$ A deformation of BiHom-associative dialgebra $(A, \dashv, \vdash, \varphi, \psi)$ in the sense of \cite{AG} is given by two formal power series 
$\dashv_t=\dashv_0+\dashv_1t+\dashv_2t^2+\cdots$  and $\dashv_t=\vdash_0+\vdash_1t+\vdash_2t^2+\cdots$  $(\dashv_0=\dashv$ and $\vdash_0=\vdash)$ of binary operations on A such that 
$(A[[t]], \dashv_t, \vdash_t, \varphi, \psi)$ forms a BiHom-associative dialgebra over $\mathbb{K}[[t]].$ Then it follows that the the formal sum $\pi_t=\pi_0+\pi_1t+\pi_2t^2+\cdots\in \Theta(2)[[t]]$
defines a deformation of the multiplication $\pi$, where $\pi_i( \tthreeone ; a, b)=a\dashv_i b$ and $\pi_i(\tthreetwo ; a, b)=a\vdash_i b$ for all $i\geq0.$This follows that the deformation equations of \cite{AG}
are equivalent to the deformation equation (\ref{eqd}).
\begin{definition}
Let $(A,\dashv, \vdash, \varphi, \psi)$ be a  BiHom-associative dialgebra. A $1$-parameter formal  deformation of the  BiHom-associative dialgebra $A$ is given by 
 a $\K[[t]]$-bilinear maps  $\dashv_t, \vdash_t : A[[t]]\times A[[t]]\longrightarrow A[[t]]$ of the form
$\dashv_t=\sum_{i\geq0}t^i \gamma_i$, $\vdash_t=\sum_{i\geq0}t^i\delta_i$ where each $\dashv_i, \vdash_i\in C^2_{Bd}$ with $\dashv_0=\dashv$ and $\vdash_0=\vdash$ such that the following holds 
\begin{eqnarray}
(x\dashv_t y)\dashv_t\psi(z)&=&\varphi(x)\dashv_t(y\dashv_t z),\label{eq14}\\
(x\dashv_t y)\dashv_t\psi(z)&=&\varphi(x)\dashv_t(y\vdash_t z),\label{eq15}
\end{eqnarray}
 for all $x, y, z\in A$.\\
These are equivalent to systems equations: for $n\geq 0,$
\begin{eqnarray*}
\sum_{i+j=n}(x\dashv_jy)\dashv_i\psi(z)&=&\sum_{i+j=n}\varphi(x)\dashv_i(y\dashv_jz),\label{eq19}\\
\sum_{i+j=n}(x\dashv_j y)\dashv_i\psi(z)&=&\sum_{i+j=n}\varphi(x)\dashv_i(y\vdash_j z),\label{eq20}
\end{eqnarray*}
 for all $x, y, z\in A$. 	
\end{definition}
 \begin{proposition}
Let $\dashv_{t}=\psi^{-1}\circ \dashv_t'\circ (\psi\otimes \psi)$, $\vdash_{t}=\psi^{-1}\circ \vdash_t'\circ (\psi\otimes \psi)$, $\varphi_{t}=\psi^{-1}\circ\varphi' \circ\psi$ and 
$\psi_{t}=\psi^{-1}\circ\psi' \circ\psi$. 
If $(A, \dashv_t',\vdash_t', \varphi',\psi')$ is  BiHom-associative dialgebra then $(A\left[\left[t\right]\right], \dashv_{t}, \vdash_{t} , \varphi_{t}, \psi_{t})$ 
is BiHom-associative dialgebra.
\end{proposition}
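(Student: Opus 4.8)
The plan is to verify that the transported structure $(A[[t]], \dashv_t, \vdash_t, \varphi_t, \psi_t)$ satisfies all the defining axioms of a BiHom-associative dialgebra in Definition~\ref{dia}, by pulling each axiom back through the conjugating isomorphism $\psi$. The conceptual point is that the maps $\dashv_t = \psi^{-1}\circ\dashv_t'\circ(\psi\otimes\psi)$, etc., are obtained from the primed operations by conjugation, so $\psi : (A,\dashv_t',\vdash_t',\varphi',\psi') \to (A[[t]],\dashv_t,\vdash_t,\varphi_t,\psi_t)$ should be an isomorphism of the relevant structures; since being a BiHom-associative dialgebra is preserved under isomorphism, the conclusion follows. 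I would therefore organize the argument around showing $\psi$ intertwines the two sets of operations.

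First I would record the commutation relation $\varphi_t\circ\psi_t = \psi_t\circ\varphi_t$. Expanding both sides via the definitions $\varphi_t = \psi^{-1}\varphi'\psi$ and $\psi_t = \psi^{-1}\psi'\psi$, this reduces to $\varphi'\circ\psi' = \psi'\circ\varphi'$, which holds because $(A,\dashv_t',\vdash_t',\varphi',\psi')$ is itself a BiHom-associative dialgebra. Next I would take each of the five axioms \eqref{eq1}--\eqref{eq5}, say the first, and substitute the conjugated operations. For instance, $(x\dashv_t y)\dashv_t \psi_t(z)$ unwinds to $\psi^{-1}\bigl(\psi(x\dashv_t y)\dashv_t' \psi\psi_t(z)\bigr)$, and one continues peeling off the $\psi^{-1}$ and $\psi$ factors until the expression is entirely in terms of the primed operations applied to $\psi$-images of $x,y,z$. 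The same is done for the right-hand side $\varphi_t(x)\dashv_t(y\dashv_t z)$, and the two sides are then matched using the corresponding primed axiom.

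The main obstacle, and the step I would treat most carefully, is the bookkeeping of the conjugating factors: one must check that the $\psi$ and $\psi^{-1}$ maps introduced by each operation, together with the appearance of $\psi$ inside $\varphi_t$ and $\psi_t$, cancel exactly so that the primed axiom can be invoked verbatim. The key identity making this work is that $\psi$ is invertible and that $\psi_t(z) = \psi^{-1}\psi'\psi(z)$ supplies precisely the $\psi'$-factor the primed associativity axiom expects in its $\psi'(z)$ slot; analogously $\varphi_t$ feeds a $\varphi'$-factor into the correct position. I would verify one axiom, say \eqref{eq1}, in full detail as the template, then remark that \eqref{eq2}--\eqref{eq5} follow by the identical cancellation pattern with $\dashv'$ and $\vdash'$ inserted in the appropriate positions. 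Finally, I would note that all computations are $\K[[t]]$-linear and hold coefficientwise in $t$, so the formal-power-series setting introduces no additional difficulty beyond the coefficientwise verification.
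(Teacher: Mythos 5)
Your proposal is correct and takes essentially the same route as the paper: the paper's proof likewise verifies the BiHom-associativity identity by unwinding the conjugated operations, invoking the corresponding primed axiom on $\psi(x),\psi(y),\psi(z)$, and re-inserting $\psi\circ\psi^{-1}$ factors to re-wrap the result as $(x\dashv_t y)\dashv_t\psi_t(z)$, with the remaining identities left to the identical cancellation pattern. The only quibble is the direction of your transporting map---since $\psi(x\dashv_t y)=\psi(x)\dashv_t'\psi(y)$, it is $\psi$ that carries the new structure to the primed one (so $\psi^{-1}$ goes the way you wrote)---but as $\psi$ is invertible this is immaterial.
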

\begin{proof}
By straightforward computation, we have
\begin{align*}
\varphi_{t}(x)\dashv_{t}(y\dashv_{t} z)
&=\psi^{-1}\circ\varphi'\circ\psi(x)\dashv_{t}\psi^{-1}(\psi(y)\dashv_t'\psi(z))\\
&=\psi^{-1}(\psi^{-1}\circ\psi\circ\varphi'\circ\psi(x)\dashv_t'(\psi^{-1}\circ\psi(\psi(y)\dashv_t'\psi(z))\\
&=\psi^{-1}(\varphi'\circ\psi(x)\dashv_t'(\psi(y)\dashv_t'\psi(z)))\\
&=\psi^{-1}((\psi(x)\dashv_t'\psi(y))\dashv_t'\psi'\circ\psi(z))\\
&=\psi^{-1}(\psi\circ\psi^{-1}(\psi(x)\dashv_t'\psi(y))\dashv_t'\psi\circ\psi^{-1}\psi'\circ\psi(z)))\\
&=(x\dashv_{t}y)\dashv_t\psi_t(z).
\end{align*}
This proves our proposition.
\end{proof}
\begin{definition}
Two deformations $\dashv_{t}=\sum_{i\geq0}\gamma_it^i, \dashv'_{t}=\sum_{i\geq0}\tilde{\gamma}_it^i,$ and $\vdash_{t}=\sum_{i\geq0}\delta_it^i, \vdash'_{t}=\sum_{i\geq0}\tilde{\delta}_it^i$
of the  BiHom-associative dialgebra A said to be equivalent if there exists a formal automorphism\\  $(\psi_t)_{t\geq 0}:  A\left[\left[t\right]\right]\longrightarrow A\left[\left[t\right]\right]$ of the
form $\psi_{t}=\sum_{i\geq0}\psi_it^i$ (where $\psi_i\in C^i_{Bd}(A, A)$ with $\psi_0=id$), 
\begin{eqnarray}
\psi_t(x\dashv_t y)&=&\psi_t(x)\dashv'_t\psi_t(y)\label{eq24}\\
\psi_t(x\vdash_t y)&=&\psi_t(x)\vdash'_t\psi_t(y)\label{eq25}\\
\psi_t(\psi_t(x))=\psi'_t(\psi_t(x))&, &\psi_t(\psi_t(x))=\psi'_t(\psi_t(x))
\end{eqnarray}
for all $x, y\in A$. \\
This deformation $A_t$ of $A$ is said to be trivial if and only if $A_t$ is equivalent to $A$. The identities (\ref{eq24}) and (\ref{eq25}) may be written for all $x, y\in A, $
\begin{eqnarray*}
\sum_{i\geq0,j\geq 0}(\psi_i(x\dashv_jy))t^{i+j}&-&\sum_{i\geq0,j\geq 0,k\geq0}(\psi_i(x)\dashv'_j\psi_k(y))t^{i+j+k}=0,\\
\sum_{i\geq0,j\geq 0}(\psi_i(x\vdash_jy))t^{i+j}&-&\sum_{i\geq0,j\geq 0,k\geq0}(\psi_i(x)\vdash'_j\psi_k(y)t^{i+j+k}=0.
\end{eqnarray*}
In particular,
\begin{eqnarray*}
\psi_0(x\dashv_1 y)+\psi_1(x\dashv_0 y)=\psi_0(x)\dashv'_0\psi_1(y)+\psi_1(x)\dashv'_0\psi_0(y)+\psi_0(x)\dashv'_1\psi_0(y)\\
\psi_0(x\vdash_1 y)+\psi_1(x\vdash_0 y)=\psi_0(x)\vdash'_0\psi_1(y)+\psi_1(x)\vdash'_0\psi_0(y)+\psi_0(x)\vdash'_1\psi_0(y)
\end{eqnarray*}
Since $\psi_0=id$ then 
\begin{eqnarray*}
x\dashv'_1 y=x\dashv_1 y-\psi_1(x\dashv_0y)-x\dashv'_0\psi_1(y)-\psi_1(x)\dashv'_0y\\
x\vdash'_1 y=x\vdash_1 y-\psi_1(x\vdash_0y)-x\vdash'_0\psi_1(y)-\psi_1(x)\vdash'_0y.
\end{eqnarray*}
\end{definition}
\begin{proposition}
Let $\dashv'=\psi_t\circ\dashv(\psi^{-1}_t\otimes\psi_t^{-1})$, $\vdash'=\psi_t\circ\vdash(\psi^{-1}_t\otimes\psi_t^{-1})$, $\varphi'=\psi_t\circ\varphi \circ\psi^{-1}_t$ and 
$\psi'=\psi_t\circ\psi \circ\psi_t^{-1}$. 
If $(A\left[\left[t\right]\right], \dashv_{t}, \vdash_{t} , \varphi, \psi)$ is  BiHom-associative dialgebra then  $(A, \dashv',\vdash', \varphi',\psi')$  
is BiHom-associative dialgebra.
\end{proposition}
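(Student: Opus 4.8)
The plan is to argue by transport of structure along the invertible map $\psi_t$. Since $\psi_t=\sum_{i\geq 0}\psi_i t^i$ is a formal automorphism with $\psi_0=\mathrm{id}$, it is invertible over $\mathbb{K}[[t]]$, and the primed data are by definition the conjugates of the operations of the given BiHom-associative dialgebra $(A[[t]],\dashv_t,\vdash_t,\varphi,\psi)$ by $\psi_t$. Writing the defining formulas in evaluated form, $x\dashv' y=\psi_t(\psi_t^{-1}(x)\dashv_t\psi_t^{-1}(y))$, likewise $x\vdash' y=\psi_t(\psi_t^{-1}(x)\vdash_t\psi_t^{-1}(y))$, while $\varphi'=\psi_t\varphi\psi_t^{-1}$ and $\psi'=\psi_t\psi\psi_t^{-1}$. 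I would then verify that each identity of Definition \ref{dia} is inherited by $(\dashv',\vdash',\varphi',\psi')$.

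First I would dispatch the commutativity $\varphi'\circ\psi'=\psi'\circ\varphi'$: substituting the definitions gives $\varphi'\psi'=\psi_t\varphi\psi\psi_t^{-1}$ and $\psi'\varphi'=\psi_t\psi\varphi\psi_t^{-1}$ after the inner pair $\psi_t^{-1}\psi_t$ cancels, and these agree since $\varphi\psi=\psi\varphi$ holds in the original structure.

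For the five associativity-type axioms \eqref{eq1}--\eqref{eq5} the mechanism is uniform, so I would illustrate it on \eqref{eq1} and note that the remaining four are identical up to replacing the relevant occurrences of $\dashv_t$ by $\vdash_t$ dictated by each identity. Setting $u=\psi_t^{-1}(x)$, $v=\psi_t^{-1}(y)$, $w=\psi_t^{-1}(z)$ and unwinding the definitions (using that $\psi_t^{-1}\circ\psi_t=\mathrm{id}$ collapses the nested conjugations, that $\psi'(z)=\psi_t(\psi(w))$, and that $\varphi'(x)=\psi_t(\varphi(u))$), the left-hand side becomes $(x\dashv' y)\dashv'\psi'(z)=\psi_t\bigl((u\dashv_t v)\dashv_t\psi(w)\bigr)$, while the right-hand side becomes $\varphi'(x)\dashv'(y\dashv' z)=\psi_t\bigl(\varphi(u)\dashv_t(v\dashv_t w)\bigr)$. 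Applying $\psi_t$ to the original identity \eqref{eq1} for $(\dashv_t,\vdash_t,\varphi,\psi)$ at the arguments $u,v,w$ equates the two bracketed expressions, so both sides coincide; the mixed axioms \eqref{eq2}--\eqref{eq5} follow in exactly the same way.

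The computation carries no genuine obstacle beyond bookkeeping; the only point requiring care is the cancellation of the $\psi_t$/$\psi_t^{-1}$ pairs, which is precisely what exhibits $\psi_t$ as an isomorphism from $(A[[t]],\dashv_t,\vdash_t,\varphi,\psi)$ onto $(A,\dashv',\vdash',\varphi',\psi')$ and thereby forces the latter to satisfy all the defining identities. This mirrors the verification already carried out for the preceding proposition, merely with $\psi_t$ in place of $\psi^{-1}$.
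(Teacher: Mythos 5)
Your proposal is correct and follows essentially the same route as the paper: unwind the conjugation definitions, cancel the $\psi_t^{-1}\circ\psi_t$ pairs, apply the original BiHom-associativity identity to the arguments $\psi_t^{-1}(x),\psi_t^{-1}(y),\psi_t^{-1}(z)$, and re-wrap with $\psi_t$, which is precisely the paper's displayed computation for the first axiom. Your version is marginally more complete in that it also checks $\varphi'\circ\psi'=\psi'\circ\varphi'$ and states explicitly that the remaining axioms follow by the same mechanism, points the paper leaves implicit.
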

\begin{proof}
By straightforward computation, we have
\begin{align*}
(x\dashv'y)\dashv' \psi'(z)
&=\psi_t\circ(\psi^{-1}_t(x)\dashv\psi^{-1}_t(y))\dashv'(\psi_t\circ\psi\circ\psi^{-1}_t(z))\\
&=\psi_t(\psi^{-1}_t\circ\psi_t\circ(\psi^{-1}_t(x)\dashv\psi^{-1}_t(y))\dashv\psi^{-1}_t(\psi_t\circ\psi\circ\psi^{-1}_t(z)))\\
&=\psi_t(\psi^{-1}_t(x)\dashv\psi^{-1}_t(y))\dashv\psi\psi^{-1}(z))\\
&=\psi_t(\varphi\circ\psi^{-1}_t(x)\dashv(\psi^{-1}_t(y)\dashv\psi^{-1}(z)))\\
&=\psi_t(\psi^{-1}_t\circ\psi_t\circ\varphi\circ\psi^{-1}_t(x)\dashv\psi^{-1}_t\circ\psi_t(\psi^{-1}_t(y)\dashv\psi^{-1}_t(z)))\\
&=\psi_t\circ\varphi\circ\psi^{-1}_t(x)\dashv'\psi_t\circ(\psi^{-1}_t(y)\dashv\psi^{-1}_t(z)))=\varphi'(x)\dashv'(y\dashv'z).
\end{align*}
This completes the proof.
\end{proof}
\section{acknowledgment}{We would like to thank the editorial board and referees for their careful study and valuable suggestions within the manuscript.}


\begin{thebibliography}{99}
 \bibitem{AM}Ammar, F., Ejbehi, Z. and Makhlouf, A., 2011. Cohomology and Deformations of Hom-algebras. Journal of Lie Theory, 21(4), pp.813-836.

\bibitem{YH} Cheng, Y. and Qi, H., 2022, March. Representations of BiHom-Lie algebras. In Algebra Colloquium (Vol. 29, No. 01, pp. 125-142). World Scientific Publishing Company.

\bibitem{AD}Das, A., 2018. Homotopy G-algebra structure on the cochain complex of hom-type algebras. Comptes Rendus. Mathématique, 356(11-12), pp.1090-1099.
\bibitem{AD1}Das, A., 2022. Cohomology of BiHom-associative algebras. Journal of Algebra and Its Applications, 21(01), p.2250008.

\bibitem{GACF}Graziani, G., Makhlouf, A., Menini, C. and Panaite, F., 2015. BiHom-associative algebras, BiHom-Lie algebras and BiHom-bialgebras. SIGMA. Symmetry, Integrability and Geometry: Methods and Applications, 11, p.086.
 \bibitem{KA} Khalili, V. and Asif, S., 2022. On the derivations of BiHom-Poisson superalgebras. Asian-European Journal of Mathematics, 15(08), p.2250147.
\bibitem{KV}
 Kolesnikov, P.S. and Voronin, V.Y., 2013. On special identities for dialgebras. Linear and Multilinear Algebra, 61(3), pp.377-391.
 \bibitem{AF}Loday, J.L., Chapoton, F., Frabetti, A., Goichot, F. and Rabetti, A., 2001. Dialgebra (co) homology with coefficients. Dialgebras and related operads, pp.67-103.
\bibitem{XL}Li, X., 2019. BiHom-Poisson algebra and its application. International Journal of Algebra, 13(2), pp.73-81.
\bibitem{LAF}Liu, L., Makhlouf, A., Menini, C. and Panaite, F., 2017. BiHom-pre-Lie algebras, BiHom-Leibniz algebras and Rota-Baxter operators on BiHom-Lie algebras. arXiv preprint arXiv:1706.00474.
 \bibitem{LCP}Liu, L., Makhlouf, A., Menini, C. and Panaite, F., 2020. BiHom-Novikov algebras and infinitesimal BiHom-bialgebras. Journal of Algebra, 560, pp.1146-1172.
 \bibitem{L}
Loday, J.L., Chapoton, F., Frabetti, A., Goichot, F. and Loday, J.L., 2001. Dialgebras (pp. 7-66). Springer Berlin Heidelberg.

\bibitem{ML}Ma, T. and Li, B., 2023. Transposed BiHom-Poisson algebras. Communications in Algebra, 51(2), pp.528-551.
\bibitem{MM}
Majumdar, A., 2002. Deformation theory of dialgebras (Doctoral dissertation, Indian Statistical Institute-Kolkata).
\bibitem{AG} Majumdar, A. and Mukherjee, G., 2004. Dialgebra cohomology as a G-algebra. Transactions of the American Mathematical Society, 356(6), pp.2443-2457.
\bibitem{AM}
Makhlouf, A. and Zahari, A., 2020. Structure and classification of Hom-associative algebras. Acta et Commentationes Universitatis Tartuensis de Mathematica, 24(1), pp.79-102.
\bibitem{P}
Pozhidaev, A.P., 2008. Dialgebras and related triple systems. Siberian Mathematical Journal, 49(4), pp.696-708.

\bibitem{ISR} Rakhimov, I.S., 2016, March. On central extensions of associative dialgebras. In Journal of Physics: Conference Series (Vol. 697, No. 1, p. 012009). IOP Publishing.
\bibitem{RRB}
Rikhsiboev, I.M., Rakhimov, I.S. and Basri, W., 2014, November. Diassociative algebras and their derivations. In Journal of Physics: Conference Series (Vol. 553, No. 1, p. 012006). IOP Publishing.

\bibitem{RRB1}
Rikhsiboev, I.M., Rakhimov, I.S. and Basri, W., 2010. Classification of 3-dimensional complex diassociative algebras. Malaysian Journal of Mathematical Sciences, 4(2), pp.241-254.
\bibitem{SVW}
Salazar-Díaz, O.P., Velásquez, R. and Wills-Toro, L.A., 2016. Construction of dialgebras through bimodules over algebras. Linear and Multilinear Algebra, 64(10), pp.1980-2001.




 

 






%






 













\bibitem{Y1}Yau, D., 2007. Gerstenhaber structure and Deligne’s conjecture for Loday algebras. Journal of Pure and Applied Algebra, 209(3), pp.739-752.


\bibitem{AI}Zahari, A. and Bakayoko, I., 2021. On BiHom-associative dialgebras. arXiv preprint arXiv:2101.07646.

\end{thebibliography}
\end{document}